\newtheorem{definition}{Definition}
\newtheorem{assumption}{Assumption}
\newtheorem{example}{Example}[section]
\newtheorem{problem}{Problem}
\newtheorem{theoremV2}{Theorem 2 - Version} \setcounter{theoremV2}{1}
\colorlet{purple}{black}
\title{Identifiability of total effects from abstractions of time series\\ causal graphs}
\author[1,2]{Charles~K.~Assaad}
\author[3]{Emilie~Devijver}
\author[3]{Eric~Gaussier}
\author[4]{Gregor~G\"ossler}
\author[5]{Anouar~Meynaoui}
\affil[1]{%
	Sorbonne Université, INSERM, Institut Pierre Louis d’Epidémiologie et de Santé Publique\\
	F75012, Paris, France
}
\affil[2]{%
	EasyVista\\
	F38000, Grenoble, France
}
\affil[3]{%
	Univ Grenoble Alpes, CNRS, Grenoble INP, LIG\\
	F38000, Grenoble, France
}
\affil[4]{%
	Univ. Grenoble Alpes, INRIA, CNRS, Grenoble INP, LIG\\
	F38000, Grenoble, France
}
\affil[5]{%
	Université of Rennes 2\\
	F35000, Rennes, France
}
\begin{document}
	\maketitle
	
	\begin{abstract}
		We study the problem of identifiability of the total effect of an intervention from observational time series in the situation, common in practice, where one only has access to abstractions of the true causal graph. We consider here two abstractions: the extended summary causal graph, which conflates all lagged causal relations but distinguishes between lagged and instantaneous relations, and the summary causal graph which does not give any indication about the lag between causal relations. We show that the total effect is always identifiable in extended summary causal graphs and provide sufficient conditions for identifiability in summary causal graphs. We furthermore provide adjustment sets allowing to estimate the total effect whenever it is identifiable.
	\end{abstract}
	
	\section{Introduction}
	Over the last century and across numerous disciplines, experimentation has emerged as a potent methodology for estimating without bias the total effect of an intervention on a specific component of a given system \citep{Neyman_1990}. However, experimentation can be costly, unethical or even unfeasible. Both researchers and experts are thus interested in estimating the effect of an intervention directly from observational data. 
	This can be done under some assumptions when relying on a complete causal graph \citep{Pearl_2000}, and typically relies on two sequential steps: identifiability and estimation \citep{Pearl_2019seven}.
	The identifiability step involves distinguishing cases where a solution is possible and, when it exists, providing an estimand - an expression enabling the estimation of intervention effects from observational data. The subsequent step involves the actual estimation of this estimand from the available data.
	
	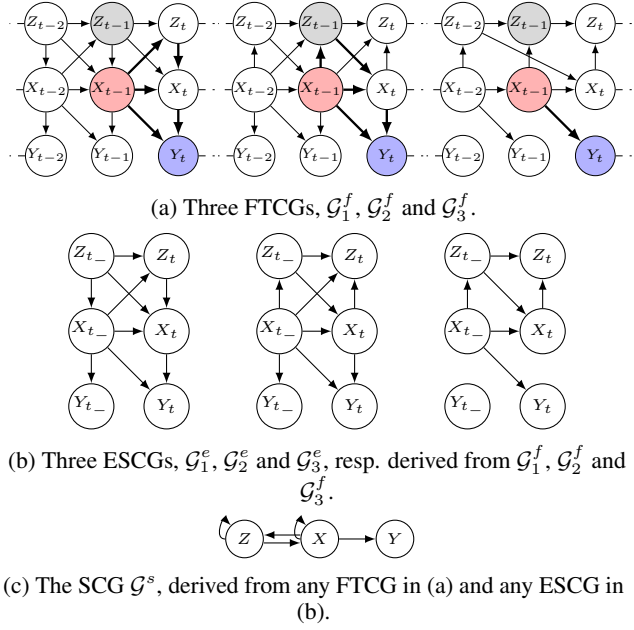
\begin{figure}[t]
\centering

 \begin{subfigure}{.5\textwidth}
 \centering
 \scalebox{0.88}{
\begin{tikzpicture}[{black, circle, draw, inner sep=0}]
 \tikzset{nodes={draw,rounded corners},minimum height=0.6cm,minimum width=0.6cm, font = \tiny}
 \tikzset{latent/.append style={fill=gray!60}}
 
 \node (X) at (1,1) {$X_t$};
 \node[fill=blue!30] (Y) at (1,0) {$Y_t$};
 \node[fill=red!30] (X-1) at (0,1) {$X_{t-1}$};
 \node (Y-1) at (0,0) {$Y_{t-1}$};
 \node (X-2) at (-1,1) {$X_{t-2}$};
 \node (Y-2) at (-1,0) {$Y_{t-2}$};
 \node (V) at (1,2) {$Z_t$};
 \node[fill=gray!30] (V-1) at (0,2) {$Z_{t-1}$};
 \node (V-2) at (-1,2) {$Z_{t-2}$};
 \draw[->,>=latex, line width=1pt] (X-1) to (Y);
 \draw[->,>=latex] (X-2) -- (Y-1);
 \draw[->,>=latex,line width=1pt] (X) to  (Y);
 \draw[->,>=latex] (X-1) to (Y-1);
 \draw[->,>=latex] (X-2) to  (Y-2);
 \draw[->,>=latex] (X-2) -- (X-1);
 \draw[->,>=latex, line width=1pt] (X-1) -- (X);
 \draw[->,>=latex] (V-2) -- (V-1);
 \draw[->,>=latex] (V-1) -- (V);
 \draw[->,>=latex] (V-2) -- (X-1);
 \draw[->,>=latex] (V-1) -- (X);
 \draw[->,>=latex] (X-2) -- (V-1);
 \draw[->,>=latex,line width=1pt] (X-1) -- (V);
 \draw[->,>=latex] (V-2) to  (X-2);
 \draw[->,>=latex] (V-1) to  (X-1);
 \draw[->,>=latex, line width=1pt] (V) to  (X);
`
\draw [dashed,>=latex] (V-2) to[left] (-1.55,2);
\draw [dashed,>=latex] (X-2) to[left] (-1.55,1);
 \draw [dashed,>=latex] (Y-2) to[right] (-1.55,0);
 \draw [dashed,>=latex] (V) to[right] (1.55,2);
\draw [dashed,>=latex] (X) to[right] (1.55,1);
\draw [dashed,>=latex] (Y) to[right] (1.55, 0);

 \node (a) at (4.15,1) {$X_t$};
 \node[fill=blue!30] (b) at (4.15,0) {$Y_t$};
 \node[fill=red!30] (a-1) at (3.15,1) {$X_{t-1}$};
 \node (b-1) at (3.15,0) {$Y_{t-1}$};
 \node (a-2) at (2.15,1) {$X_{t-2}$};
 \node (b-2) at (2.15,0) {$Y_{t-2}$};
 \node (c) at (4.15,2) {$Z_t$};
 \node[fill=gray!30] (c-1) at (3.15,2) {$Z_{t-1}$};
 \node (c-2) at (2.15,2) {$Z_{t-2}$};
 \draw[->,>=latex, line width=1pt] (a-1) to (b);
 \draw[->,>=latex] (a-2) -- (b-1);
 \draw[->,>=latex, line width=1pt] (a) to  (b);
 \draw[->,>=latex] (a-1) to  (b-1);
 \draw[->,>=latex] (a-2) to  (b-2);
 \draw[->,>=latex] (c-2) -- (a-1);
 \draw[->,>=latex, line width=1pt] (c-1) -- (a);
 \draw[->,>=latex] (a-2) -- (c-1);
 \draw[->,>=latex] (a-1) -- (c);
 \draw[<-,>=latex] (c-2) to  (a-2);
 \draw[<-,>=latex,line width=1pt] (c-1) to  (a-1);
 \draw[<-,>=latex] (c) to  (a);
 \draw[->,>=latex] (a-2) -- (a-1);
 \draw[->,>=latex, line width=1pt] (a-1) -- (a);
 \draw[->,>=latex] (c-2) -- (c-1);
 \draw[->,>=latex] (c-1) -- (c);
\draw [dashed,>=latex] (c-2) to[left] (1.6,2);
\draw [dashed,>=latex] (a-2) to[left] (1.6, 1);
 \draw [dashed,>=latex] (b-2) to[right] (1.6, 0);
\draw [dashed,>=latex] (c) to[right] (4.7,2);
\draw [dashed,>=latex] (a) to[right] (4.7, 1);
\draw [dashed,>=latex] (b) to[right] (4.7, 0);

 \node (d) at (7.3,1) {$X_t$};
 \node[fill=blue!30] (e) at (7.3,0) {$Y_t$};
 \node[fill=red!30] (d-1) at (6.3,1) {$X_{t-1}$};
 \node (e-1) at (6.3,0) {$Y_{t-1}$};
 \node (d-2) at (5.3,1) {$X_{t-2}$};
 \node (e-2) at (5.3,0) {$Y_{t-2}$};
 \node (f) at (7.3,2) {$Z_t$};
 \node[fill=gray!30] (f-1) at (6.3,2) {$Z_{t-1}$};
 \node (f-2) at (5.3,2) {$Z_{t-2}$};
 \draw[->,>=latex, line width=1pt] (d-1) to (e);
 \draw[->,>=latex] (d-2) -- (e-1);
 \draw[->,>=latex] (f-2) -- (d);
 \draw[<-,>=latex] (f-2) to  (d-2);
 \draw[<-,>=latex] (f-1) to  (d-1);
 \draw[<-,>=latex] (f) to  (d);
 \draw[->,>=latex] (d-2) -- (d-1);
 \draw[->,>=latex] (d-1) -- (d);
 \draw[->,>=latex] (f-2) -- (f-1);
 \draw[->,>=latex] (f-1) -- (f);
\draw [dashed,>=latex] (f-2) to[left] (4.75,2);
\draw [dashed,>=latex] (d-2) to[left] (4.75, 1);
 \draw [dashed,>=latex] (e-2) to[right] (4.75, 0);
\draw [dashed,>=latex] (f) to[right] (7.85,2);
\draw [dashed,>=latex] (d) to[right] (7.85, 1);
\draw [dashed,>=latex] (e) to[right] (7.85, 0);

\end{tikzpicture}}
 	\caption{\centering Three FTCGs, $\mathcal{G}_{1}^f$, $\mathcal{G}_{2}^f$ and $\mathcal{G}_{3}^f$.}
 \label{fig:example_FTCG}
 
 \end{subfigure}
 \vspace{0.1cm}
 
 \begin{subfigure}{.5\textwidth}\centering
\begin{tikzpicture}[{black, circle, draw, inner sep=0}]
 \tikzset{nodes={draw,rounded corners},minimum height=0.6cm,minimum width=0.6cm, font = \tiny}
 \tikzset{latent/.append style={fill=gray!60}}
 
 \node (X) at (1,1) {$X_t$};
 \node (Y) at (1,0) {$Y_t$};
 \node (X-1) at (0,1) {$X_{t_-}$};
 \node (Y-1) at (0,0) {$Y_{t_-}$};
 \node (V) at (1,2) {$Z_t$};
 \node (V-1) at (0,2) {$Z_{t_-}$};
 \draw[->,>=latex] (X-1) to (Y);
 \draw[->,>=latex] (X) to  (Y);
 \draw[->,>=latex] (X-1) to (Y-1);
 \draw[->,>=latex] (X-1) -- (X);
 \draw[->,>=latex] (V-1) -- (V);
 \draw[->,>=latex] (V-1) -- (X);
 \draw[->,>=latex] (X-1) -- (V);
 \draw[->,>=latex] (V-1) to  (X-1);
 \draw[->,>=latex] (V) to  (X);
 
 \node (a) at (3.5,1) {$X_t$};
 \node (b) at (3.5,0) {$Y_t$};
 \node (a-1) at (2.5,1) {$X_{t_-}$};
 \node (b-1) at (2.5,0) {$Y_{t_-}$};
 \node (c) at (3.5,2) {$Z_t$};
 \node (c-1) at (2.5,2) {$Z_{t_-}$};
 \draw[->,>=latex] (a-1) to (b);
 \draw[->,>=latex] (a) to  (b);
 \draw[->,>=latex] (a-1) to (b-1);
 \draw[->,>=latex] (a-1) -- (a);
 \draw[->,>=latex] (c-1) -- (c);
 \draw[->,>=latex] (c-1) -- (a);
 \draw[->,>=latex] (a-1) -- (c);
 \draw[<-,>=latex] (c-1) to  (a-1);
 \draw[<-,>=latex] (c) to  (a);
 
  \node (d) at (6,1) {$X_t$};
 \node (e) at (6,0) {$Y_t$};
 \node (d-1) at (5,1) {$X_{t_-}$};
 \node (e-1) at (5,0) {$Y_{t_-}$};
 \node (f) at (6,2) {$Z_t$};
 \node (f-1) at (5,2) {$Z_{t_-}$};
 \draw[->,>=latex] (d-1) to (e);
 \draw[->,>=latex] (d-1) -- (d);
 \draw[->,>=latex] (f-1) -- (f);
 \draw[->,>=latex] (f-1) -- (d);
 \draw[<-,>=latex] (f-1) to  (d-1);
 \draw[<-,>=latex] (f) to  (d);
\end{tikzpicture}
 	\caption{\centering Three ESCGs, $\mathcal{G}_{1}^e$, $\mathcal{G}_{2}^e$ and $\mathcal{G}_{3}^e$, resp. derived from $\mathcal{G}_{1}^f$, $\mathcal{G}_{2}^f$ and $\mathcal{G}_{3}^f$.}
 \label{fig:example_ESCG}
 \end{subfigure}
 \hfill 
  \vspace{0.1cm}

  \begin{subfigure}{0.5\textwidth}
 \centering
	\begin{tikzpicture}[{black, circle, draw, inner sep=0}]
	\tikzset{nodes={draw,rounded corners},minimum height=0.5cm,minimum width=0.5cm, font = \tiny}	
	\tikzset{anomalous/.append style={fill=easyorange}}
	\tikzset{rc/.append style={fill=easyorange}}
 	\node (Z) at (-1,0) {$Z$};
	\node (X) at (0,0) {$X$} ;
	\node (Y) at (1,0) {$Y$};
 \draw[->,>=latex] (X) -- (Y);
 \begin{scope}[transform canvas={yshift=.15em}]
 \draw [->,>=latex,] (X) -- (Z);
 \end{scope}
 \begin{scope}[transform canvas={yshift=-.15em}]
 \draw [<-,>=latex,] (X) -- (Z);
 \end{scope}

	\draw[->,>=latex] (X) to [out=180,in=135, looseness=2] (X);
	\draw[->,>=latex] (Z) to [out=180,in=135, looseness=2] (Z);

 \end{tikzpicture}
 \caption{\centering The SCG $\mathcal{G}^s$, derived from any FTCG in (a) and any ESCG in (b).}
 \label{fig:example_SCG}
 \end{subfigure}
 \hfill
 
 \caption{Illustration: (a) three FTCGs, (b) three ESCGs derived from them, (c) the SCG which can be derived from any FTCG in (a) and any ESCG in (b). Consider $f(y_t |do(x_{t-1}))$, red vertex: the variable we intervene on, blue vertex: the response we are considering. Bold edges correspond to directed paths from $X_{t-1}$ to $Y_t$, and gray vertices correspond to nodes with different status depending on the FTCG (see Definition \ref{def:ambiguous_vertices}).}
 \label{fig:example}
\end{figure}%
	
	The identifiability step received much attention for non-temporal causal graphs \citep{Pearl_1993StatScience,Pearl_1995,Spirtes_2000, Pearl_2000,Shpitser_2008}. For abstraction of causal graphs,  \cite{Perkovic_2020} derived necessary and sufficient conditions for identifying total effects in  maximally oriented partially directed acyclic graphs and \cite{Anand_2023} provided necessary and sufficient conditions when dealing with a directed acyclic graphs, where each vertex represent a cluster of variables and where relationships between clusters of variables are specified, but relationships between the variables within a cluster are left unspecified. 
	
	For temporal causal graph, \citet{Blondel_2016} developed the do-calculus for the full-time causal graphs (FTCGs, Figure \ref{fig:example_FTCG}).
	However, in dynamic systems, experts have difficulties in building full time causal graphs, while they can usually build an abstraction of those graphs such as an extended summary causal graph (ESCG, as in Figure \ref{fig:example_ESCG}) where  all lagged causal relations are conflated but lagged and instantaneous relations are clearly distinguished or such as a summary causal graph (SCG, as in Figure \ref{fig:example_SCG}) where all temporal information is omitted. Assuming no instantaneous relations, \citet{Eichler_2007} demonstrated that the total effect is identifiable from an ESCG or an SCG, and \cite{Assaad_2023} established identifiability  in the presence of instantaneous relations for acyclic SCGs.  \cite{Ferreira_2024} addressed the identifiability problem for general SCGs, including cycles and instantaneous relations for the direct effect; however, the identifiability of total effects in this context remains unexplored.
	
	
	Our main contributions consist in demonstrating, under causal sufficiency, that the total effect is always identifiable when working with an extended summary causal graph and providing sufficient conditions for identifying the total effect when working with a summary causal graph. The main difficulty lies in the fact that these abstractions may represent different full-time causal graphs with potentially different skeletons and orientations. 
	
	
	The remainder of the paper is structured as follows: Section \ref{sec:notions} introduces the main notions, Section \ref{sec:setup} presents the problem setup, identifiability conditions in ESCGs and SCGs are respectively presented in Sections~\ref{sec:ESCG} and \ref{sec:SCG}. Section~\ref{sec:real_app} discusses real applications for our theoretical results, and Section~\ref{sec:conclusion} concludes the paper. 
	Omitted proofs can be found in the Supplementary Material.
	
	\section{Preliminaries}
	\label{sec:notions}
	\paragraph{Graph notions}
	
	For a \emph{directed acyclic graph} $\mathcal{G}$, a \emph{path} from $X$ to $Y$ in $\mathcal{G}$ is a sequence of distinct vertices $<X,\ldots, Y>$ in which every pair of successive vertices is adjacent. A \emph{directed path} from $X$ to $Y$ is a path from $X$ to $Y$ in which all edges are directed towards $Y$ in $\mathcal{G}$, that is $X \rightarrow \ldots \rightarrow Y$. A \emph{backdoor path} between $X$ and $Y$ is a path between $X$ and $Y$ with an arrowhead into $X$ in $\mathcal{G}$. If $X\rightarrow Y$, then $X$ is a \emph{parent} of $Y$. If there is a directed path from $X$ to $Y$, then $X$ is an \emph{ancestor} of $Y$, and $Y$ is a \emph{descendant} of $X$. A vertex counts as its own descendant and as its own ancestor. The sets of parents, ancestors and descendants of $X $ in $\mathcal{G}$ are denoted by $\text{Par}(X,\mathcal{G})$, $\text{Anc}(X,\mathcal{G})$ and $\text{Desc}(X,\mathcal{G})$ respectively. 
	If a path $\pi$ contains $X \rightarrow Z \leftarrow Y$ as a subpath, then $Z$ is a \emph{collider} on $\pi$. A vertex $Z$ is a \emph{definite non-collider} on a path $\pi$ if the edge $X \leftarrow Z$, or the edge $Z \rightarrow Y$ is on $\pi$. A vertex is of \emph{definite status} on a path if it is a collider, a definite non-collider or an endpoint on the path. A path $\pi$ is of \emph{definite status} if every vertex on $\pi$ is of definite status. A path $\pi$ from $X$ to $Y$ of definite status is \emph{active} given a vertex set $\mathcal{Z}$, with $X,Y \notin  \mathcal{Z}$ if every definite non-collider on $\pi$ is not in $\mathcal{Z}$, and every collider on $\pi$ has a descendant in $\mathcal{Z}$. Otherwise, $\mathcal{Z}$ \emph{blocks} $\pi$.  By a slight abuse of notation, we denote $\mathcal{G} \backslash \{Y\}$ as the subgraph of $\mathcal{G}$ when removing the vertex $Y$ and its corresponding edges. Lastly, the \emph{skeleton} of a graph corresponds to all vertices and edges of the graph without considering edge orientations.

	For a \emph{directed graph} $\mathcal{G}$, a directed path from $X$ to $Y$ and a directed path from $Y$ to $X$ 
	form a \emph{directed cycle} in $\mathcal{G}$.  A self-loop on $X$ also forms a directed cycle.
	We denote by $Cycles(X,\mathcal{G})$ the set of all directed cycles containing $X$ in $\mathcal{G}$, and by $Cycles^>(X,\mathcal{G})$ the subset of $Cycles(X,\mathcal{G})$ with at least two different vertices (i.e., excluding self-loops).
	In addition, all notions introduced before for directed acyclic graphs hold for \emph{directed graphs}, with potential cycles. To avoid any ambiguity we would like to make some clarifications. For a \emph{directed graph} $\mathcal{G}$, a \emph{backdoor path} between $X$ and $Y$ is a path between $X$ and $Y$ which starts by either $X\leftarrow$ or $X \leftrightarrows$. A path is blocked by an empty set if there exists a vertex $Z$ such that $\rightarrow Z \leftarrow$ is on the path. Note that 
	the above does not hold for $\leftrightarrows Z\leftarrow$, $\rightarrow Z\leftrightarrows$ or $\leftrightarrows Z\leftrightarrows$. 
	For clarity, whenever a path is blocked by an empty set in a directed graph we will say that it is $\sigma$-blocked\footnote{The notion of $\sigma$-blocked path by a set $\mathcal{Z}$ is a generalization of the notion of blocked path by a set $\mathcal{Z}$ (which was introduced for directed acyclic graphs) to directed graphs~\citep{Forre_2017}. These two notions becomes equivalent when $\mathcal{Z}=\emptyset$. In this paper, we will use the notion of $\sigma$-blocked only when  $\mathcal{Z}=\emptyset$.}.
	Note that $X \rightleftarrows Y$ and $X \leftarrow Y$ are the only $\sigma$-active backdoor paths of size $2$ in $\mathcal{G}$.

	If each vertex in a directed acyclic graph corresponds to an observed variable then, given an ordered pair of vertices $(X, Y)$ in $\mathcal{G}$, a set of vertices $\mathcal{Z}$ satisfies the \emph{standard backdoor criterion} relative to $(X, Y)$ if no vertex in $\mathcal{Z}$ is a descendant of $X$, and $\mathcal{Z}$ blocks every backdoor path between $X$ and $Y$.
	
	
	
	%
	\paragraph{Causal graphs in time series}
	Consider $\mathcal{V}$ a set of $p$ observational time series and $\mathcal{V}^f=\{\mathcal{V}_{t-\ell} | \ell \in \mathbb{Z}\}$ the set of temporal instances of $\mathcal{V}$ where $\mathcal{V}_{t-\ell}$ correspond to the variables of the time series at time $t-\ell$. 
	We suppose that the time series are generated from an \emph{unknown} dynamic structural causal model (DSCM, \citet{DSCM}), an extension of structural causal models (SCM, \citet{Pearl_2000}) to time series. 
	This DSCM defines a full-time causal graph (FTCG, see below) which we call the \emph{true} FTCG~\citep{Runge_2019,Runge_2021,Assaad_2022journal} and a joint distribution $P$, which is assumed to be positive, over its vertices which we call the \emph{true} probability distribution.
	
	%
	\begin{definition}[Full-time causal graph (FTCG), Figure~\ref{fig:example_FTCG}]
		Let $\mathcal{V}$ be a set of $p$ observational time series and $\mathcal{V}^f=\{\mathcal{V}_{t-\ell} | \ell \in \mathbb{Z}\}$. The \emph{full-time causal graph} (FTCG) $\mathcal{G}^f=(\mathcal{V}^f, \mathcal{E}^f)$ representing a given DSCM is defined by: 
		$X_{t-\gamma} \rightarrow Y_t \in \mathcal{E}^f$ if and only if $X$ directly causes $Y$ at time $t$ with a time lag of $\gamma>0$ if $X=Y$ and with a time lag of $\gamma \geq 0$ for $X \neq Y$. 
	\end{definition} 
	
	As common in causality studies on time series, we consider in the remainder acyclic FTCGs with potential instantaneous causal relations. Note that acyclicity is guaranteed for relations between variables at different time stamps. In addition, note that for any time series $X$, $\forall i>0$,  $X_{t-i}$ can cause $X_t$; for example, the stock price yesterday can affect the stock price today. We furthermore assume causal sufficiency:
	\begin{assumption}[Causal sufficiency] \label{ass:cs}
		There is no hidden common cause between any two observed variables.
	\end{assumption}
	
	In practice, it is usually impossible to work with FTCGs and people have resorted to simpler causal graphs, exploiting the fact that causal relations between time series hold throughout time, as formalized in the following assumption which allows one to focus on a finite number of past slices, given by the maximum lag. We fix it to $\gamma_{\max}$ in the remainder.
	
	\begin{assumption}[Consistency throughout time]
		\label{ass:Consistency_Time}
		All the causal relationships in the the FTCG $\mathcal{G}^f$ remain constant in direction and magnitude throughout time\footnote{In our context we consider a dynamic system with several univariate observational time series, thus the problem of finding a unique total effect would be ill-posed if Assumption~\ref{ass:Consistency_Time} is not satisfied since violating the assumption would mean that the total effect would change over time.
		}.
	\end{assumption}

	Experts are used to working with abstractions of causal graphs which summarize the information into a smaller graph that is interpretable, often with the omission of precise temporal information. We consider in this study two known causal abstractions for time series, namely \textit{extended summary causal graphs} and \textit{summary causal graphs}. An extended summary causal graph \citep{Assaad_2022uai} distinguishes between past time slices, denoted as $ \mathcal{V}^e_{t^-}$, and present time slices, denoted as $ \mathcal{V}^e_{t}$, thus enabling the differentiation between lagged and instantaneous causal relations. 
	\begin{definition}[Extended summary causal graph (ESCG), Figure~\ref{fig:example_ESCG}]
		\label{Ext_Summary_G}
		Let $\mathcal{G}^f=(\mathcal{V}^f,\mathcal{E}^f)$ be an FTCG built from the set of time series $\mathcal{V}$ satisfying Assumption \ref{ass:Consistency_Time} with maximal temporal lag $\gamma_{\max}$. The \emph{extended summary causal graph} (ESCG) $\mathcal{G}^{e}=(\mathcal{V}^{e},\mathcal{E}^{e})$ associated to $\mathcal{G}^f$ is given by $\mathcal{V}^{e}=( \mathcal{V}^e_{t^-},\mathcal{V}^e_t)$ and $\mathcal{E}^{e}$ defined as follows:
		\begin{itemize}
			\item for any $X$ in $\mathcal{V}$, we define two vertices, $X_{t^-}$ and $X_t$, respectively in $\mathcal{V}^e_{t^-}$ and $\mathcal{V}^e_t$; 
			\item for all $X_t, Y_t \in \mathcal{V}_t^{e}$, $X_{t} \rightarrow Y_{t} \in \mathcal{E}^{e}$ if and only if $X_{t} \rightarrow Y_{t} \in \mathcal{E}^f$;
			\item for all $X, Y \in \mathcal{V}_{t^-}^e$,  $X_{t^-} \rightarrow Y_{t} \in \mathcal{E}^{e}$ if and only if there exists at least one temporal lag $0<\gamma\leq \gamma_{\max}$ such that $X_{t-\gamma} \rightarrow Y_t \in \mathcal{E}^f$.
		\end{itemize}
		In that case, we say that $\mathcal{G}^e$ is \emph{derived from} $\mathcal{G}^{f}$.
	\end{definition}
	At a higher level of abstraction, a summary causal graph \citep{Peters_2013, Meng_2020, Assaad_2022journal,Assaad_2022survey} represents causal relationships among time series, regardless of the time delay between the cause and its effect. 
	\begin{definition}[Summary causal graph (SCG), Figure~\ref{fig:example_SCG}]
		\label{Summary_G}
		Let $\mathcal{G}^f=(\mathcal{V}^f,\mathcal{E}^f)$ be an FTCG built from the set of time series $\mathcal{V}$ satisfying Assumption \ref{ass:Consistency_Time} with maximal temporal lag $\gamma_{\max}$. The  \emph{summary causal graph} (SCG) $\mathcal{G}^s=(\mathcal{V}^s, \mathcal{E}^s)$ associated to $\mathcal{G}^f$ is such that 
		\begin{itemize}
			\item $\mathcal{V}^s$ corresponds to the set of time series $\mathcal{V}$,
			\item $X \rightarrow Y \in \mathcal{E}^s$ if and only if there exists at least one temporal lag $0\leq \gamma\leq \gamma_{\max}$ such that $X_{t-\gamma} \rightarrow Y_t \in \mathcal{E}^f$.
		\end{itemize} 
		In that case, we say that $\mathcal{G}^s$ is \emph{derived from} $\mathcal{G}^f$ as well as from the ESCG derived from $\mathcal{G}^f$. 
	\end{definition}
	Since an FTCG is assumed to be a directed acyclic graph, an ESCG is inherently a directed acyclic graph. In contrast, an SCG is a directed graph as it may include directed cycles and even self-loops. For example, the three FTCGs in Figure \ref{fig:example_FTCG} and the three ESCGs in Figure  \ref{fig:example_ESCG} are acyclic, while the SCG in Figure \ref{fig:example_SCG} has a cycle. 
	We use the notation $X \rightleftarrows Y$ to indicate situations where there are time lags where $X$ causes $Y$ and other lags where $Y$ causes $X$. Additionally, if an SCG is an abstraction of an ESCG, in cases where there is no instantaneous relation, ESCGs and SCGs convey the same information.
	
	It is worth noting that if there is a single ESCG or SCG derived from a given FTCG, different FTCGs, with possibly different orientations and skeletons, can yield the same ESCG or SCG. 
	For example, the SCG in Figure \ref{fig:example_SCG}  can be derived from any FTCG and any ESCG in Figures \ref{fig:example_FTCG} and \ref{fig:example_ESCG}, even though they may have different skeletons (for example, $\mathcal{G}_{1}^f$ and $\mathcal{G}_{3}^f$ or $\mathcal{G}_{1}^e$ and $\mathcal{G}_{3}^e$) and different orientations (for example, $\mathcal{G}_{1}^f$ and $\mathcal{G}_{2}^f$ or $\mathcal{G}_{1}^e$ and $\mathcal{G}_{2}^e$).
	Therefore, even if each vertex in an FTCG is assumed to represent a single observed variable, a vertex in the past slice of an ESCG represent a set of variables while a vertex in the present time slice represents a single variable, and a vertex in the SCG corresponds to a time series.
	In the remainder, for a given ESCG or SCG $\mathcal{G}$, we call any FTCG from which $\mathcal{G}$ can be derived as a \textit{candidate FTCG} for $\mathcal{G}$. For example, in Figure \ref{fig:example}, $\mathcal{G}_{1}^f$, $\mathcal{G}_{2}^f$ and $\mathcal{G}_{3}^f$ are all candidate FTCGs for $\mathcal{G}^s$. The set of all candidate FTCGs for $\mathcal{G}$ is denoted by $\mathcal{C}(\mathcal{G})$.

	\section{Problem setup}
	\label{sec:setup}
	
	We focus in this paper on the \emph{total effect}~\citep{Pearl_2000} of the \emph{singleton} variable $X_{t-\gamma}$ on the \emph{singleton} variable $Y_t$, written $P (Y_t=y_t | do (X_{t-\gamma}=x_{t-\gamma}))$ (as well as $P (y_t | do (x_{t-\gamma}))$ by a slight abuse of notation), \emph{when the only knowledge one has of the underlying DSCM consists in the ESCG or SCG derived from the unknown, true FTCG}. $Y_t$ corresponds to the response and $do(X_{t-\gamma}=x_{t-\gamma})$ represents an intervention (as defined in \citet{Pearl_2000} and \citet[Assumption 2.3]{Eichler_2007}) on the variable $X$ at time $t-\gamma$, with $\gamma\geq 0$. 
	
	The above setting is very common in practice and entails that one neither knows the true FTCG nor the true probability distribution. Futhermore, even if one has access to observed data, in practice such observations are finite, which prevents one from discovering the true FTCG, and even from detecting it in the set of candidate FTCGs, as no existing causal discovery method is guaranteed to yield the true FTCG in the finite data setting \citep{Ait_Bachir_2023}. In the purely theoretical context of infinite data, discovering the true FTCG is only possible with additional assumptions, beyond the scope of this study~\citep{Assaad_2022survey}. 
	
	Each candidate FTCG proposes a particular decomposition of the true joint probability distribution which is given by the standard recursive decomposition that characterizes Bayesian networks. 
	%
	%
	Not all decompositions are however correct wrt the true probability distribution $P$. 
	
	
	In general, a total effect $P(y_t \mid do(x_{t-\gamma}))$ is said to be identifiable from a graph if it can be uniquely computed with a do-free formula from the observed positive distribution
	\citep{Pearl_1995,Perkovic_2020}. In our context, this means that the same do-free formula should hold in all candidate FTCG so as to guarantee that it holds for the true one.
	\begin{definition}[Identifiability of total effects in ESCGs and SCGs] In a given ESCG or SCG $\mathcal{G}$, $P(y_t \mid do(x_{t-\gamma}))$ is \emph{identifiable} iff it can be rewritten with a do-free formula that is valid for any FTCG in $\mathcal{C}(\mathcal{G})$. 
	\end{definition}
	One way to rewrite  $P(y_t \mid do(x_{t-\gamma}))$ with a do free-formula is by finding an adjustment set of variables for which:
	\begin{equation}
		\label{eq:adjustment_formula}
		P(y_t | do(x_{t-\gamma})) = \sum_\mathbf{z} P(y_t|x_{t-\gamma}, \mathbf{z}) P(\mathbf{z}).    
	\end{equation}
	Whenever a set of variables satisfy Equation~\eqref{eq:adjustment_formula}, we call it a \emph{valid adjustment} set.
	The standard backdoor criterion, introduced in \cite{Pearl_1995}, allows one to obtain valid adjustment sets using the true FTCG. 
	We provide here another version of the backdoor criterion that allows us to find a valid adjustment set given all candidate FTCGs without knowing which one is the true FTCG.
	
	\begin{definition}[Backdoor criterion over all candidate FTCGs]
		Let $\mathcal{G} = (\mathcal{V},\mathcal{E})$ be an ESCG or SCG. 
		A set of vertices $\mathcal{Z}$ 
		satisfies the \emph{backdoor criterion over all candidate FTCGs} relative to $(X_{t-\gamma},Y_t)$ if 
		\begin{itemize}
			\item[(i)] $\mathcal{Z}$ blocks all backdoor paths between $X_{t-\gamma}$ and $Y_t$ in any FTCG in $\mathcal{C}(\mathcal{G})$,
			\item[(ii)] $\mathcal{Z}$ does not contain any descendant of $X_{t-\gamma}$ in any FTCG in $\mathcal{C}(\mathcal{G})$.
		\end{itemize}
	\end{definition}
	Note that when there is no backdoor path between $X_{t-\gamma}$ and $Y_t$ in any FTCG in $\mathcal{C}(\mathcal{G})$, $\mathcal{Z}=\emptyset$ satisfies the backdoor criterion over all candidate FTCGs. 
	
	The backdoor criterion over all candidate FTCGs is sound for  the identification of the total effect $P(y_t|do(x_{t-\gamma}))$ in an ESCG or SCG, as stated in the following corollary that can be deduced from \cite[Theorem~1]{Pearl_1995}.
	
	\begin{restatable}{corollary}{mypropositioninit}
		Let $X$ and $Y$ be distinct vertices in an ESCG or SCG $\mathcal{G}$ of a DSCM with true (unknown) probability $P$. Under Assumptions \ref{ass:cs} and \ref{ass:Consistency_Time} for $\mathcal{G}$, if there exists a set $\mathcal{Z}$ satisfying the backdoor criterion over all possible FTCGs relative to $(X_{t-\gamma},Y_t)$, then the total effect of $X_{t-\gamma}$ on $Y_t$ is identifiable in $\mathcal{G}$, and 
		$\mathcal{Z}$ is a valid adjustment set for the formulae given in Equation \eqref{eq:adjustment_formula}. 
	\end{restatable}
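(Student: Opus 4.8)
The plan is to reduce the statement to the classical back-door adjustment theorem of \citet[Theorem~1]{Pearl_1995}, applied to whichever FTCG happens to be the true one. The key starting observation is that, since $\mathcal{G}$ is by hypothesis an ESCG or SCG derived from the true FTCG $\mathcal{G}^f_\ast$, we have $\mathcal{G}^f_\ast \in \mathcal{C}(\mathcal{G})$; moreover the true distribution $P$ is, by construction of the DSCM, Markov-compatible with $\mathcal{G}^f_\ast$ and, by hypothesis, positive. Under Assumption~\ref{ass:cs} every vertex of $\mathcal{G}^f_\ast$ is an observed variable, so there are no latent confounders and the ordinary back-door machinery applies to $\mathcal{G}^f_\ast$ without modification; Assumption~\ref{ass:Consistency_Time} is what makes $\mathcal{C}(\mathcal{G})$ well defined in the first place.

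Next I would specialize the two conditions defining the backdoor criterion over all candidate FTCGs to the single graph $\mathcal{G}^f_\ast$. Condition~(i) then says exactly that $\mathcal{Z}$ blocks every backdoor path between $X_{t-\gamma}$ and $Y_t$ in $\mathcal{G}^f_\ast$, and condition~(ii) says that no element of $\mathcal{Z}$ is a descendant of $X_{t-\gamma}$ in $\mathcal{G}^f_\ast$ — i.e. $\mathcal{Z}$ satisfies the standard backdoor criterion relative to $(X_{t-\gamma},Y_t)$ in $\mathcal{G}^f_\ast$. Applying \citet[Theorem~1]{Pearl_1995} then yields Equation~\eqref{eq:adjustment_formula} with $P$ the true distribution, i.e. $\mathcal{Z}$ is a valid adjustment set for the true FTCG.

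It remains to upgrade this to identifiability in the sense of the definition, i.e. to a do-free formula that does not depend on the unknown identity of the true FTCG. This is immediate: the right-hand side of Equation~\eqref{eq:adjustment_formula} refers only to the vertices $X_{t-\gamma}$, $Y_t$ and the fixed set $\mathcal{Z}$ — all of which are present in every FTCG over $\mathcal{V}$ — and to the observational law $P$, never to the edge set of any particular candidate. Re-running the previous argument with an arbitrary candidate $\mathcal{G}^f \in \mathcal{C}(\mathcal{G})$ playing the role of the true FTCG shows that the very same formula is valid for every candidate, hence in particular for the true one; so $P(y_t \mid do(x_{t-\gamma}))$ is identifiable in $\mathcal{G}$ and $\mathcal{Z}$ is a valid adjustment set.

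The only point that calls for care — and the closest thing here to an obstacle — is that an FTCG is an infinite graph, whereas \citet[Theorem~1]{Pearl_1995} is stated for finite causal DAGs, and the ancestral set of $\{X_{t-\gamma},Y_t\}\cup\mathcal{Z}$ is in general infinite (it contains the whole past of each involved time series through lagged self-influence). I would dispatch this by appealing to the do-calculus for full-time causal graphs already developed by \citet{Blondel_2016}, under which the back-door adjustment identity carries over to FTCGs verbatim once $P$ is the well-defined law generated by the DSCM; no genuinely new argument beyond \citet[Theorem~1]{Pearl_1995} is then required.
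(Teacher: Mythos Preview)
Your proposal is correct and follows essentially the same approach as the paper, which does not spell out a proof but simply states that the corollary ``can be deduced from \cite[Theorem~1]{Pearl_1995}.'' You have correctly filled in the deduction: the true FTCG lies in $\mathcal{C}(\mathcal{G})$, so the uniform backdoor conditions specialize to the standard ones there, and Pearl's theorem yields Equation~\eqref{eq:adjustment_formula}; the same argument run on every candidate gives identifiability in the paper's sense. Your remark on the infinite-graph issue, resolved via \citet{Blondel_2016}, is a nice bit of care that the paper leaves implicit.
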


	However, enumerating all candidate FTCGs is computationally expensive \citep{Robinson_1977}, even when considering the constraints given by an ESCG or an SCG.
	
	Formally, we address the following technical problem:
	\begin{problem}
		Consider an ESCG or an SCG $\mathcal{G}$ and the total effect $P(y_t | do(x_{t-\gamma}))$. We aim to find out conditions to identify $P(y_t | do(x_{t-\gamma}))$ when having access solely to an ESCG or an SCG without enumerating all candidate FTCGs in $\mathcal{C}(\mathcal{G})$.
	\end{problem}

	\noindent \textbf{Remarks}
	\begin{enumerate}
		%
		\item 
		Our context is different from the one considered in \citet{Perkovic_2020} since the graphs we have to consider for a given ESCG or SCG, namely the candidate FTCGs, may have different skeletons and may not all be compatible with the true underlying distribution. Furthermore, in ESCGs and SCGs, each vertex does not necessarily correspond to a single variable. 
		
		\item Our context is different from the one considered in \citet{Anand_2023}. They consider cluster of variables, even for the response and the intervention variable, while  we are interested in the total effect $P(y_t\mid x_{t-\gamma})$ where the response variable and the intervention variable are singletons. 
		Furthermore, we may have cycles in the SCGs, while they assume acyclic graphs. 
		
		\item The cycles that we consider in this work, namely in SCGs,  do not hold the same conceptual meaning as the cycles considered in \cite{Bongers_2021}, as in our case, cyclicity comes from the abstraction of an acyclic graph. 
		
	\end{enumerate}

	\section{Identifiability in ESCG}
	\label{sec:ESCG}
	
	The total effect is always identifiable by adjustment in ESCGs, as stated in the following theorem.
	\begin{restatable}{theorem}{mytheoremtwo}{(Identifiability in ESCG)}
		\label{Identification_Ges}
		Consider an ESCG $\mathcal{G}^{e}$. Under Assumptions \ref{ass:cs} and \ref{ass:Consistency_Time} for $\mathcal{G}^e$, the total effect $P(y_t | do(x_{t - \gamma}))$  is identifiable in $\mathcal{G}^{e}$ for any $\gamma \geq 0$. Furthermore, the set \begin{align*}
			\mathcal{B}_{\gamma} =& \{ (Z_{t-\gamma-\ell})_{1\leq \ell \leq \gamma_{\max}} | Z_{t^-} \in Par(X_t, \mathcal{G}^{e})\}\\
			&\cup \{Z_{t-\gamma} | Z_{t} \in Par(X_t, \mathcal{G}^{e}) \},
		\end{align*}
		is a valid adjustment set for $P(y_t | do(x_{t - \gamma}))$  for the formulae given in Equation \eqref{eq:adjustment_formula}. 
	\end{restatable}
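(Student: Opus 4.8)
The plan is to verify that $\mathcal{B}_\gamma$ satisfies the \emph{backdoor criterion over all candidate FTCGs} relative to $(X_{t-\gamma},Y_t)$: identifiability together with the validity of $\mathcal{B}_\gamma$ for \eqref{eq:adjustment_formula} then follows from the corollary stated above (deduced from \cite[Theorem~1]{Pearl_1995}, and requiring Assumptions~\ref{ass:cs} and~\ref{ass:Consistency_Time}), except in one degenerate configuration that must be handled by hand. Fix an arbitrary candidate FTCG $\mathcal{G}^f\in\mathcal{C}(\mathcal{G}^e)$. I would establish: (a) $\text{Par}(X_{t-\gamma},\mathcal{G}^f)\subseteq\mathcal{B}_\gamma$; (b) $\mathcal{B}_\gamma\cap\text{Desc}(X_{t-\gamma},\mathcal{G}^f)=\emptyset$; and (c) $\mathcal{B}_\gamma$ blocks every backdoor path from $X_{t-\gamma}$ to $Y_t$ in $\mathcal{G}^f$, unless $\gamma=0$ and $Y_t\rightarrow X_t\in\mathcal{E}^e$ (the degenerate case).

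For (a): a parent $W$ of $X_{t-\gamma}$ in $\mathcal{G}^f$ is either instantaneous, $W=Z_{t-\gamma}$ with $Z_{t-\gamma}\rightarrow X_{t-\gamma}\in\mathcal{E}^f$ and $Z\neq X$, or lagged, $W=Z_{t-\gamma-\ell}$ with $1\leq\ell\leq\gamma_{\max}$. Using Assumption~\ref{ass:Consistency_Time} to shift the effect time back to $t$ gives $Z_t\rightarrow X_t\in\mathcal{E}^f$ in the first case and $Z_{t-\ell}\rightarrow X_t\in\mathcal{E}^f$ in the second; by Definition~\ref{Ext_Summary_G} this places $Z_t$, resp. $Z_{t^-}$, in $\text{Par}(X_t,\mathcal{G}^e)$, whence $W\in\mathcal{B}_\gamma$ by construction (for the lagged case, $W=Z_{t-\gamma-\ell}$ occurs among the $(Z_{t-\gamma-\ell'})_{1\leq\ell'\leq\gamma_{\max}}$). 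For (b): any vertex of $\mathcal{B}_\gamma$ of the form $Z_{t-\gamma-\ell}$ with $\ell\geq1$ lies strictly in the past of $X_{t-\gamma}$, hence cannot be a descendant of $X_{t-\gamma}$ since time is non-decreasing along every directed path of an FTCG; any vertex of the form $Z_{t-\gamma}$ with $Z_t\in\text{Par}(X_t,\mathcal{G}^e)$ is, by the argument in (a), a parent of $X_{t-\gamma}$ in $\mathcal{G}^f$, hence an ancestor, hence not a descendant by acyclicity (and it differs from $X_{t-\gamma}$ since an ESCG, being acyclic, has no self-loop on $X_t$).

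For (c): a backdoor path $\pi$ from $X_{t-\gamma}$ to $Y_t$ in $\mathcal{G}^f$ begins with an edge $X_{t-\gamma}\leftarrow W$, so $W\in\text{Par}(X_{t-\gamma},\mathcal{G}^f)\subseteq\mathcal{B}_\gamma$ by (a). If $\pi$ has length at least two, $W$ is an interior vertex whose edge towards $X_{t-\gamma}$ is outgoing, so $W$ is a definite non-collider on $\pi$; since $W\in\mathcal{B}_\gamma$, this alone blocks $\pi$, irrespective of which other vertices $\mathcal{B}_\gamma$ contains. The only remaining case is the length-one backdoor path $X_{t-\gamma}\leftarrow Y_t$, i.e. $Y_t\rightarrow X_{t-\gamma}\in\mathcal{E}^f$, which by temporal precedence and Definition~\ref{Ext_Summary_G} forces $\gamma=0$ and $Y_t\rightarrow X_t\in\mathcal{E}^e$ (hence $Y_t\rightarrow X_t$ in every candidate FTCG). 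Here I would argue directly rather than via backdoor adjustment: by acyclicity $X_t$ is not an ancestor of $Y_t$ in any candidate FTCG, so $P(y_t\mid do(x_t))=P(y_t)$, a do-free expression valid for all of $\mathcal{C}(\mathcal{G}^e)$, hence identifiable; and since $Y_t\in\mathcal{B}_0$ in this case, conditioning on the $Y_t$-coordinate of $\mathbf{z}$ in the right-hand side of \eqref{eq:adjustment_formula} (using positivity of $P$) collapses the sum to $P(y_t)$, so $\mathcal{B}_0$ is still a valid adjustment set. Outside this case, (a)--(c) give the backdoor criterion over all candidate FTCGs, and the corollary concludes.

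The main obstacle is (c) in the non-degenerate case: showing that the single set $\mathcal{B}_\gamma$, read off the ESCG alone, blocks all backdoor paths in every candidate FTCG at once, without enumerating $\mathcal{C}(\mathcal{G}^e)$. What makes this work is (a): $\mathcal{B}_\gamma$ always contains $\text{Par}(X_{t-\gamma},\mathcal{G}^f)$, which already blocks all backdoor paths, while the surplus vertices of $\mathcal{B}_\gamma$ (past instances $Z_{t-\gamma-\ell}$ of lagged parents that need not act at lag $\ell$ in a given FTCG) are non-descendants of $X_{t-\gamma}$ and therefore cannot re-open a path already blocked at one of its parents. The bookkeeping to be careful with is checking that $Y_t\in\mathcal{B}_\gamma$ holds exactly in the degenerate configuration, so that the two branches of the argument are mutually exclusive and jointly exhaustive.
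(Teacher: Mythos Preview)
Your proof is correct and follows essentially the same route as the paper: establish that $\text{Par}(X_{t-\gamma},\mathcal{G}^f)\subseteq\mathcal{B}_\gamma$ (your (a)) and that $\mathcal{B}_\gamma$ contains no descendant of $X_{t-\gamma}$ (your (b)), then conclude via the backdoor criterion over all candidate FTCGs. You are in fact more careful than the paper in one place: you isolate the degenerate configuration $\gamma=0$ with $Y_t\to X_t\in\mathcal{E}^e$, where $Y_t\in\mathcal{B}_0$ so the backdoor criterion does not literally apply, and verify directly that the right-hand side of \eqref{eq:adjustment_formula} collapses to $P(y_t)$; the paper instead opens with a case $X\notin\text{Anc}(Y,\mathcal{G}^e)$ in which it only shows $P(y_t\mid do(x_{t-\gamma}))=P(y_t)$ without checking that $\mathcal{B}_\gamma$ itself produces this value through \eqref{eq:adjustment_formula}. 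One minor wording point: in (b) you justify that an instantaneous member $Z_{t-\gamma}$ of $\mathcal{B}_\gamma$ is a parent of $X_{t-\gamma}$ in $\mathcal{G}^f$ ``by the argument in (a)'', but (a) runs in the opposite direction; what you actually need (and what makes the step valid) is the \emph{if and only if} in Definition~\ref{Ext_Summary_G} for instantaneous edges together with consistency throughout time.
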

	If $\mathcal{B}_{\gamma}$ is a valid adjustment set, it may still be very large. Additional adjustment sets, potentially smaller than $B_{\gamma}$, can however be obtained in the densest candidate FTCG, which is the candidate FTCG which contains all potential edges and is thus maximal in the number of edges. 
	
	\begin{restatable}{proposition}{mypropositionone}
		\label{backdoor_Ges}
		Consider an ESCG $\mathcal{G}^{e}$ and a maximal lag $\gamma_{\max}$ and let $\gamma \geq 0$. Any adjustment set $\mathcal{B}_{\gamma}'$ for the total effect $P(y_t | do(x_{t - \gamma}))$ that satisfies the standard backdoor criterion on the densest candidate FTCG in $\mathcal{C}(\mathcal{G}^{e})$ is a valid adjustment set for the total effect. 
		In addition, $\mathcal{B}_{\gamma}$ is a valid adjustment set with respect to the standard backdoor criterion on the densest candidate FTCG.
	\end{restatable}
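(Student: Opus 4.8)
The plan is to exploit that the densest candidate FTCG, which I denote $\mathcal{G}^{f}_{\max}$, edge-dominates every member of $\mathcal{C}(\mathcal{G}^{e})$, and that the standard backdoor criterion degrades monotonically under edge deletion; both assertions then reduce to this observation, the second one after identifying $\mathcal{B}_\gamma$ with the parent set of $X_{t-\gamma}$ in $\mathcal{G}^{f}_{\max}$. First I would make $\mathcal{G}^{f}_{\max}$ explicit: by Assumption~\ref{ass:Consistency_Time} its edge set, replicated across all time slices, is $\{X_t\to Y_t : X_t\to Y_t\in\mathcal{E}^{e}\}\cup\{X_{t-\gamma'}\to Y_t : 1\le\gamma'\le\gamma_{\max},\ X_{t^-}\to Y_t\in\mathcal{E}^{e}\}$. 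I would check that it is acyclic — its instantaneous part coincides with that of $\mathcal{G}^{e}$, hence is acyclic, and purely lagged edges respect the time order — so $\mathcal{G}^{f}_{\max}\in\mathcal{C}(\mathcal{G}^{e})$; and conversely Definition~\ref{Ext_Summary_G} forces every $\mathcal{G}^f\in\mathcal{C}(\mathcal{G}^{e})$ to carry an edge into $Y_t$ from a time instance of $X$ only when the corresponding (lagged or instantaneous) edge is present in $\mathcal{E}^{e}$, so $\mathcal{E}(\mathcal{G}^f)\subseteq\mathcal{E}(\mathcal{G}^{f}_{\max})$ on the shared vertex set $\mathcal{V}^f$.

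The key lemma I would prove is monotonicity of blocking under edge deletion: if $\mathcal{Z}$ blocks a backdoor path $\pi$ between $X_{t-\gamma}$ and $Y_t$ in $\mathcal{G}^{f}_{\max}$, then it blocks $\pi$ in every subgraph $\mathcal{G}^f$ in which $\pi$ still exists. The collider/non-collider status of a vertex on $\pi$ depends only on the two edges of $\pi$ incident to it, which are unchanged; a blocking non-collider lying in $\mathcal{Z}$ remains one, and for a blocking collider $C$ one has $\text{Desc}(C,\mathcal{G}^f)\subseteq\text{Desc}(C,\mathcal{G}^{f}_{\max})$, so $C$ still has no descendant in $\mathcal{Z}$. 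Since every backdoor path of $\mathcal{G}^f$ is edge-for-edge a backdoor path of $\mathcal{G}^{f}_{\max}$, and hence blocked there by hypothesis, it is blocked in $\mathcal{G}^f$; and $\text{Desc}(X_{t-\gamma},\mathcal{G}^f)\subseteq\text{Desc}(X_{t-\gamma},\mathcal{G}^{f}_{\max})$ transfers condition~(ii) as well. Therefore any $\mathcal{B}_\gamma'$ meeting the standard backdoor criterion relative to $(X_{t-\gamma},Y_t)$ in $\mathcal{G}^{f}_{\max}$ meets the backdoor criterion over all candidate FTCGs, and the first claim follows from the soundness corollary above (deduced from \cite[Theorem~1]{Pearl_1995}).

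For the second claim I would note, again using Assumption~\ref{ass:Consistency_Time}, that in $\mathcal{G}^{f}_{\max}$ the parents of $X_{t-\gamma}$ are exactly the lagged vertices $Z_{t-\gamma-\ell}$ for $1\le\ell\le\gamma_{\max}$ with $Z_{t^-}\in\text{Par}(X_t,\mathcal{G}^{e})$, together with the instantaneous vertices $Z_{t-\gamma}$ with $Z_t\in\text{Par}(X_t,\mathcal{G}^{e})$; that is, $\mathcal{B}_\gamma=\text{Par}(X_{t-\gamma},\mathcal{G}^{f}_{\max})$ (note $X_{t-\gamma}\notin\mathcal{B}_\gamma$ since the instantaneous part of $\mathcal{G}^{e}$ has no self-loop). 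It is classical that in a DAG the parent set of a vertex contains no descendant of it, by acyclicity, and blocks every backdoor path out of it, since such a path starts $X_{t-\gamma}\leftarrow W$ with $W$ a non-collider lying in the adjustment set; hence $\mathcal{B}_\gamma$ satisfies the standard backdoor criterion relative to $(X_{t-\gamma},Y_t)$ in $\mathcal{G}^{f}_{\max}$, and by the first part it is a valid adjustment set, consistently with Theorem~\ref{Identification_Ges}.

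I expect the main obstacle to be the monotonicity lemma and the care it requires around colliders: edge deletion can only shrink descendant sets, so it can never reopen a collider that was closed, which is why ``blocked in the densest graph'' is the strongest such statement and correctly implies ``blocked in every candidate'' rather than the reverse; pinning down this direction is the crux. A degenerate case to dispose of separately is $\gamma=0$ with $Y_t$ an instantaneous parent of $X_t$, so that $Y_t\in\mathcal{B}_0$: there $X_t$ is not an ancestor of $Y_t$, the total effect equals $P(y_t)$, and the adjustment formula collapses to it.
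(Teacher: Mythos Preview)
Your proposal is correct and follows essentially the same route as the paper: both argue that every candidate FTCG is an edge-subgraph of the densest one, so any backdoor path in a candidate already appears in $\mathcal{G}^{f}_{\max}$ and remains blocked there, while descendant sets can only shrink. Your write-up is more careful than the paper's (which dispatches the first claim in three sentences and does not spell out the second claim at all): you make the collider case of the monotonicity lemma explicit, you verify $\mathcal{G}^{f}_{\max}\in\mathcal{C}(\mathcal{G}^{e})$, you identify $\mathcal{B}_\gamma=\text{Par}(X_{t-\gamma},\mathcal{G}^{f}_{\max})$ to obtain the second assertion, and you flag the degenerate case $Y_t\in\mathcal{B}_0$ that the paper ignores.
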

	Note however that smaller (in the number of variables) adjustment sets may exist in the true FTCG when it is different from the densest candidate FTCG.
	
	
	\section{Identifiability in SCG}
	\label{sec:SCG}
	
	
	In this section, we start by presenting the main result of the paper which provides sufficient conditions for identifying the total effect only by using an SCG and providing an adjustment set that can be used whenever the sufficient conditions are satisfied. Then we provide another adjustment set that is more suitable in practice. Finally, we discuss several examples where the total is not identifiable using an SCG.

	
	Note that we are only considering sufficient conditions because the backdoor criterion is not complete, meaning it does not provide all possible valid adjustment sets. Therefore, the backdoor criterion over all candidate FTCGs is not necessarily complete.

	\subsection{Main result: sufficient conditions for identifiability}\label{sec:suff}
	
	We provide sufficient conditions\footnote{In Supplementary Material, we provide an equivalent version of Theorem~\ref{Thm:identification_summary} which might be easier to read to certain readers.} for the identifiability in SCG. Recall that $Cycles(X,\mathcal{G}^s)$ is the set of all directed cycles containing $X$ in $\mathcal{G}^s$, and $Cycles^>(X,\mathcal{G}^s)$ is the subset where cycles contain at least 2 different vertices.

	\begin{restatable}{theorem}{mytheoremthree}{(Identifiability in SCG)}
		\label{Thm:identification_summary}
		Consider an SCG $\mathcal{G}^s=(\mathcal{V}^s,\mathcal{E}^s)$ associated with a DSCM with true (unknown) probability distribution $P$. Under Assumptions \ref{ass:cs} and \ref{ass:Consistency_Time}, the total effect $P(y_t | do(x_{t - \gamma}))$, with $\gamma \geq 0$, is identifiable if $X \notin Anc(Y,\mathcal{G}^s)$ or $X \in Anc(Y,\mathcal{G}^s)$ and none of the following holds: 
		\begin{enumerate}
			\item $\gamma \ne 0$ and $Cycles^>(X, \mathcal{G}^s\backslash \{Y\}) \neq \emptyset$, or
			\item there exists a $\sigma$-active backdoor path 
			$$\pi^s = \langle V^1=X, \cdots, V^n=Y\rangle$$ from $X$ to $Y$ in $\mathcal{G}^s$ such that $\langle V^2, \cdots, V^{n-1}\rangle\subseteq Desc(X, \mathcal{G}^s)$ and one of the following holds:
			\begin{enumerate}
				\item $n > 2$, i.e. $\langle V^2, \cdots, V^{n-1}\rangle\ne\emptyset$, or
				\item $n=2$ and $\gamma\ne 1$, or
			\item $n=2$, $\gamma = 1$ and \textcolor{purple}{$Cycles(Y, \mathcal{G}^s)\backslash\{X\leftrightarrows Y\} \neq \emptyset$}\footnote{In the original published version, $Cycles(Y, \mathcal{G}^s)\backslash\{X\leftrightarrows Y\} \neq \emptyset$ was erroneously replaced by $Cycles(Y, \mathcal{G}^s\backslash\{X\}) \neq \emptyset$.}. 
			\end{enumerate}
		\end{enumerate}
	\end{restatable}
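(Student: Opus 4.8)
The plan is to prove identifiability by producing, in every case, a do‑free estimand that is valid in \emph{all} candidate FTCGs; by the corollary stated above this reduces — outside of one easy case — to exhibiting a set $\mathcal{Z}$ that satisfies the backdoor criterion over all candidate FTCGs relative to $(X_{t-\gamma},Y_t)$. \emph{Easy case, $X\notin\text{Anc}(Y,\mathcal{G}^s)$.} Here I would argue directly that $P(y_t\mid do(x_{t-\gamma}))=P(y_t)$ in every $\mathcal{G}^f\in\mathcal{C}(\mathcal{G}^s)$: a directed path in any candidate FTCG projects onto a directed walk in $\mathcal{G}^s$, so $X\notin\text{Anc}(Y,\mathcal{G}^s)$ forces $X_{t-\gamma}\notin\text{Anc}(Y_t,\mathcal{G}^f)$, and intervening on a non‑ancestor leaves the response marginal unchanged (truncated factorisation / rule~3 of do‑calculus). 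The same do‑free expression $P(y_t)$ then works for all candidates, hence identifiability.

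\textbf{Main case: $X\in\text{Anc}(Y,\mathcal{G}^s)$ and conditions 1 and 2 both fail.} I would take as adjustment set the whole strict past of $X_{t-\gamma}$ together with the instantaneous parents of $X$ in $\mathcal{G}^s$ that are not descendants of $X$, e.g.
\[
\mathcal{Z}\;=\;\{\,Z_{t-\gamma-\ell}\mid Z\in\mathcal{V}^s,\ 1\le\ell\le\gamma_{\max}\,\}\;\cup\;\{\,Z_{t-\gamma}\mid Z\in\text{Par}(X,\mathcal{G}^s)\setminus\text{Desc}(X,\mathcal{G}^s)\,\}.
\]
Condition~(ii) is immediate: along a directed edge the time index never decreases, so no vertex at a time $<t-\gamma$ is a descendant of $X_{t-\gamma}$ in any $\mathcal{G}^f$; and if $Z\notin\text{Desc}(X,\mathcal{G}^s)$ then no candidate FTCG has a directed path from $X_{t-\gamma}$ to any copy of $Z$, so $Z_{t-\gamma}\notin\text{Desc}(X_{t-\gamma},\mathcal{G}^f)$. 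The substance is condition~(i). Fix $\mathcal{G}^f\in\mathcal{C}(\mathcal{G}^s)$ and a backdoor path $\pi$ from $X_{t-\gamma}$ to $Y_t$ and suppose, for contradiction, $\pi$ is active given $\mathcal{Z}$. Using time‑monotonicity of edges together with the observation that any vertex of $\mathcal{Z}$ lying on $\pi$ necessarily carries an outgoing edge on $\pi$ (hence is a non‑collider there), I would establish two reductions: (a) all vertices of $\pi$ lie in the window $\{t-\gamma,\dots,t\}$; and (b) $\pi$ starts with an instantaneous edge $X_{t-\gamma}\leftarrow W_{t-\gamma}$ with $W\ne X$ and $W\in\text{Par}(X,\mathcal{G}^s)\cap\text{Desc}(X,\mathcal{G}^s)$ — the last membership because otherwise $W_{t-\gamma}\in\mathcal{Z}$ would block $\pi$. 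In particular $\mathcal{G}^s$ contains a directed cycle through $X$ and $W$ with at least two distinct vertices, so $Cycles^>(X,\mathcal{G}^s)\ne\emptyset$.

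\textbf{Deriving the contradiction.} If $\gamma\ne 0$, the failure of condition~1 forces this cycle to pass through $Y$. In either the $\gamma\ne 0$ or the $\gamma=0$ situation I would then project the suffix of $\pi$ from $W_{t-\gamma}$ to $Y_t$ onto $\mathcal{G}^s$, prune the resulting walk to a path, and prepend $X\leftarrow W$, obtaining a backdoor path $\pi^s$ of $\mathcal{G}^s$ from $X$ to $Y$. Assumption~\ref{ass:Consistency_Time} (consistency through time) and acyclicity of FTCGs — which together forbid instantaneous $2$‑cycles — are the technical glue: they make the projection behave, force enough of $\pi$'s intermediate vertices to be genuine descendants of $X$ so that $\pi^s$ has $\langle V^2,\dots,V^{n-1}\rangle\subseteq\text{Desc}(X,\mathcal{G}^s)$, and ensure that colliders on the projected segment cannot be opened by $\mathcal{Z}$, so that $\pi^s$ is $\sigma$‑active. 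A final bookkeeping on the length $n$ of $\pi^s$ and on $\gamma$ then lands exactly on subcondition~2(a) when $n>2$, on 2(b) when $n=2$ and $\gamma\ne 1$, and on 2(c) when $n=2$ and $\gamma=1$ (the returning portion of $\pi$ producing a cycle through $Y$ distinct from $X\leftrightarrows Y$) — each contradicting the failure of condition~2. Hence no such $\pi$ exists, $\mathcal{Z}$ satisfies the backdoor criterion over all candidate FTCGs, and identifiability (with $\mathcal{Z}$ a valid adjustment set) follows from the corollary.

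\textbf{Main obstacle.} The delicate step is exactly this projection/extraction: turning an active‑given‑$\mathcal{Z}$ backdoor path of an arbitrary candidate FTCG into a $\sigma$‑active backdoor path of the SCG all of whose internal vertices are descendants of $X$, while (i) pruning walks to paths without destroying the backdoor, $\sigma$‑active, or descendant properties, (ii) correctly relating ``active given $\mathcal{Z}$'' in the acyclic FTCG to ``active given $\emptyset$'' in the possibly cyclic SCG (in particular controlling colliders activated by the adjustment‑set vertices at time $t-\gamma$), and (iii) splitting the cases $\gamma=0$, $\gamma=1$, $\gamma\ge 2$ so that they line up precisely with subconditions~2(a)--(c) and with condition~1. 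The recurring use of Assumption~\ref{ass:Consistency_Time} to rule out instantaneous cycles is what keeps all of this from breaking down.
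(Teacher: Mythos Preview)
Your overall strategy --- trivially handle $X\notin\text{Anc}(Y,\mathcal{G}^s)$, otherwise exhibit one adjustment set valid in all candidate FTCGs --- is the same as the paper's, which decomposes the main case into three lemmas and uses the machinery of \emph{compatible paths} for what you call the projection step. The genuine gap is your choice of adjustment set: it is too small, and this breaks precisely the claim that the projected path $\pi^s$ has $\langle V^2,\dots,V^{n-1}\rangle\subseteq\text{Desc}(X,\mathcal{G}^s)$. The paper's set $\mathcal{A}_\gamma$ includes \emph{every} $Z_{t-\gamma}$ with $Z\notin\text{Desc}(X,\mathcal{G}^s)$, whereas your $\mathcal{Z}$ keeps only those $Z$ that are also parents of $X$. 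For a concrete failure take $\gamma=0$ and the SCG on $\{X,Y,W,U\}$ with edges $X\rightarrow Y$, $X\leftrightarrows W$, $U\rightarrow W$, $U\rightarrow Y$. Here condition~1 is vacuous ($\gamma=0$) and the unique $\sigma$-active backdoor path $\langle X,W,U,Y\rangle$ has $U\notin\text{Desc}(X,\mathcal{G}^s)$, so condition~2 also fails; the theorem asserts identifiability. But $\text{Par}(X,\mathcal{G}^s)\setminus\text{Desc}(X,\mathcal{G}^s)=\{W\}\setminus\{W\}=\emptyset$, so your $\mathcal{Z}$ contains nothing at time $t$. In the candidate FTCG with instantaneous edges $W_t\rightarrow X_t$, $U_t\rightarrow W_t$, $U_t\rightarrow Y_t$, $X_t\rightarrow Y_t$ (and $X_{t-1}\rightarrow W_t$ to realise $X\rightarrow W$), the backdoor path $X_t\leftarrow W_t\leftarrow U_t\rightarrow Y_t$ is active given your $\mathcal{Z}$, yet its projection to $\mathcal{G}^s$ has internal vertex $U\notin\text{Desc}(X,\mathcal{G}^s)$: you cannot land on any of 2(a)--(c) and the contradiction never materialises. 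Enlarging to the paper's $\mathcal{A}_\gamma$ fixes this, since then $U_t\in\mathcal{A}_\gamma$ is a non-collider on the path and blocks it.

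A smaller imprecision: the assertion ``any vertex of $\mathcal{Z}$ lying on $\pi$ necessarily carries an outgoing edge on $\pi$'' is false as stated --- a strict-past vertex can be a collider (e.g.\ $A_{t-\gamma-1}\rightarrow B_{t-\gamma-2}\leftarrow C_{t-\gamma-3}$). What is true, and enough for your reduction~(a), is that if $\pi$ visits the strict past then \emph{some} strict-past vertex on $\pi$ is a non-collider: a parent on $\pi$ of any strict-past vertex is itself in the strict past and has an outgoing edge there. This is how the paper argues its property that non-ambiguous paths are blocked by $\mathcal{A}_\gamma$.
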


	In the remainder, we prove the above theorem through Lemmas~\ref{lemma:1}-\ref{lemma:3}. 
	To do so, for the total effect $P(y_t|do(x_{t-\gamma}))$, we consider the following set:
	\begin{align}\label{AgammaSCG}
		\mathcal{A}_{\gamma} =& \{ (Z_{t-\gamma-\ell})_{1\leq \ell \leq \gamma_{\max}} | Z \in Desc(X; \mathcal{G}^{s})\}\nonumber\\
		&\cup \{(Z_{t-\gamma-\ell})_{0\leq \ell \leq \gamma_{\max}} | Z \in \mathcal{V}^s\backslash Desc(X, \mathcal{G}^s) \}
	\end{align}
	and we prove that it is a valid adjustment set when the total effect is identifiable. 
	As one can note, it contains all possible parents of $X_{t-\gamma}$ in all candidate FTCGs of $\mathcal{G}^s$. Thus, $\mathcal{A}_{\gamma}$ blocks any backdoor path $\pi$ between $X_{t-\gamma}$ and $Y_t$ in any candidate FTCG through the parent of $X_{t-\gamma}$ on that path. 

	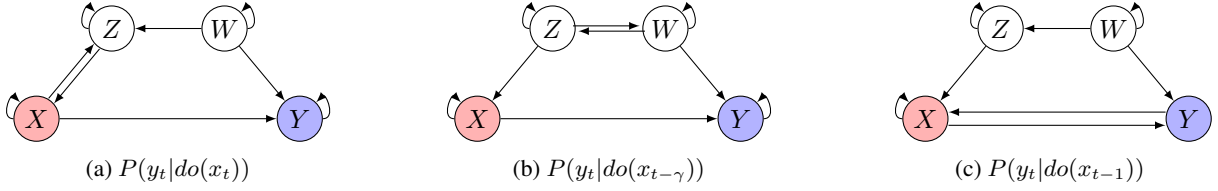
\begin{figure*}[t!]
\centering
 \begin{subfigure}{.29\textwidth}
 \centering
	\begin{tikzpicture}[{black, circle, draw, inner sep=0}]
	\tikzset{nodes={draw,rounded corners},minimum height=0.6cm,minimum width=0.6cm}	
	\tikzset{anomalous/.append style={fill=easyorange}}
	\tikzset{rc/.append style={fill=easyorange}}

 	\node (Z) at (1,1.2) {$Z$};
	\node[fill=red!30] (X) at (0,0) {$X$} ;
	\node[fill=blue!30] (Y) at (3.5,0) {$Y$};
 \node (W) at (2.5,1.2) {$W$};

 \draw[->,>=latex] (X) -- (Y);

 \begin{scope}[transform canvas={xshift=-.1em, yshift=.1em}]
 \draw [->,>=latex,] (X) -- (Z);
 \end{scope}
 \begin{scope}[transform canvas={xshift=.1em, yshift=-.1em}]
 \draw [<-,>=latex,] (X) -- (Z);
 \end{scope}

 \draw [->,>=latex,] (W) -- (Z);
 \draw [->,>=latex,] (W) -- (Y);

	\draw[->,>=latex] (X) to [out=180,in=135, looseness=2] (X);
 	\draw[->,>=latex] (Y) to [out=0,in=45, looseness=2] (Y);
	\draw[->,>=latex] (Z) to [out=180,in=135, looseness=2] (Z);
	\draw[->,>=latex] (W) to [out=0,in=45, looseness=2] (W);


	\end{tikzpicture}
 \caption{$P(y_t|do(x_t))$}
 \label{fig:example_maybe_identifiable:SCG_1}
 \end{subfigure}
\hfill
 \begin{subfigure}{.29\textwidth}
 \centering
	\begin{tikzpicture}[{black, circle, draw, inner sep=0}]
	\tikzset{nodes={draw,rounded corners},minimum height=0.6cm,minimum width=0.6cm}	
	\tikzset{anomalous/.append style={fill=easyorange}}
	\tikzset{rc/.append style={fill=easyorange}}
	
	\node[fill=red!30] (X) at (0,0) {$X$} ;
	\node[fill=blue!30] (Y) at (3.5,0) {$Y$};
	\node (W) at (2.5,1.2) {$W$};
	\node (Z) at (1,1.2) {$Z$};
	
	\draw[->,>=latex] (X) -- (Y);
 
 \draw[->,>=latex] (W) -- (Y);

 \begin{scope}[transform canvas={xshift=-.1em, yshift=.1em}]
 \draw [->,>=latex,] (Z) -- (W);
 \end{scope}
 \begin{scope}[transform canvas={xshift=.1em, yshift=-.1em}]
 \draw [<-,>=latex,] (Z) -- (W);
 \end{scope}

 \draw[->,>=latex] (Z) -- (X);


	\draw[->,>=latex] (X) to [out=180,in=135, looseness=2] (X);
	\draw[->,>=latex] (Y) to [out=0,in=45, looseness=2] (Y);
	\draw[->,>=latex] (Z) to [out=180,in=135, looseness=2] (Z);
	\draw[->,>=latex] (W) to [out=0,in=45, looseness=2] (W);
	\end{tikzpicture}
 \caption{$P(y_t|do(x_{t-\gamma}))$}
 \label{fig:example_maybe_identifiable:SCG_2}
 \end{subfigure}
 \hfill 
 \begin{subfigure}{.29\textwidth}
 \centering
	\begin{tikzpicture}[{black, circle, draw, inner sep=0}]
	\tikzset{nodes={draw,rounded corners},minimum height=0.6cm,minimum width=0.6cm}	
	\tikzset{anomalous/.append style={fill=easyorange}}
	\tikzset{rc/.append style={fill=easyorange}}
	
	\node[fill=red!30] (X) at (0,0) {$X$} ;
	\node[fill=blue!30] (Y) at (3.5,0) {$Y$};
 \node (W) at (2.5,1.2) {$W$};
	\node (Z) at (1,1.2) {$Z$};

 \begin{scope}[transform canvas={yshift=-.25em}]
 \draw [->,>=latex] (X) -- (Y);
 \end{scope}
 \begin{scope}[transform canvas={yshift=.25em}]
 \draw [<-,>=latex] (X) -- (Y);
 \end{scope}

 \draw [->,>=latex] (Z) -- (X);
 \draw [->,>=latex] (W) -- (Z);
 \draw [->,>=latex] (W) -- (Y);

	\draw[->,>=latex] (X) to [out=180,in=135, looseness=2] (X);
	\draw[->,>=latex] (Z) to [out=180,in=135, looseness=2] (Z);
	\draw[->,>=latex] (W) to [out=0,in=45, looseness=2] (W);


	\end{tikzpicture}
 \caption{$P(y_t|do(x_{t-1}))$}
 \label{fig:example_maybe_identifiable:SCG_3}
 \end{subfigure}
 \caption{Three SCGs and a total effect which is identifiable. Each pair of red and blue vertices in the FTCGs represents the total effect we are interested in, and we precise the total effect and the lag considered in the caption. This illustrates Lemma \ref{lemma:2} (Figure a-b) and Lemma \ref{lemma:3} (Figure c).}
 \label{fig:example_identifiable}
\end{figure*}%
	
	We first introduce the notion of ambiguous vertices, represented in gray in every figure, that will be useful for the proofs of most of the lemmas.
	\begin{definition}[Ambiguous vertices]
		\label{def:ambiguous_vertices}
		Consider an SCG $\mathcal{G}^s$ and the total effect $P(y_t\mid do(x_{t-\gamma}))$, for $\gamma \geq 0$.
		A vertex $V_{t'}$ belonging to an active backdoor path for $(X_{t-\gamma},Y_t)$ in a candidate FTCG is \emph{ambiguous} if there exists another candidate FTCG in which $V_{t'}$ is a descendant of $X_{t-\gamma}$.
	\end{definition}
	Ambiguous vertices are crucial for identifiability.
	In addition to ambiguous vertices, one can also define ambiguous paths, as follows.
	\begin{definition}[Ambiguous paths]
		\label{def:ambiguous_paths}
		Consider an SCG $\mathcal{G}^s$ and a candidate FTCG $\mathcal{G}^f$.
		A path $\pi^f \in \mathcal{G}^f$ between $X_{t-\gamma}$ and $Y_t$, for $\gamma \geq 0$, is an \emph{ambiguous path} if it does not contain any vertex at time $t-\gamma - \ell$ for $\ell \geq 1$.
		We note $\Pi^f_{\gamma}$ the set of all ambiguous paths in $\mathcal{G}^f$.
	\end{definition}
	When $\pi$ is not an ambiguous path ($\pi \notin \Pi^f_{\gamma}$), then at least one vertex on $\pi$ is in the past  of $X_{t-\gamma}$ and thus cannot be ambiguous. One thus has the following property:
	
	\begin{restatable}{property}{mypropertytwo}
		\label{property:non_ambiguous_identif}
		Consider an SCG $\mathcal{G}^s$ and the total effect $P(y_t\mid do(x_{t-\gamma}))$, for $\gamma \geq 0$. Suppose $\pi^f$ is a backdoor path between $X_{t-\gamma}$ and $Y_t$ in a candidate FTCG $\mathcal{G}^f$. If $\pi^f \not\in \Pi^f_{\gamma}$, then $\pi^f$ is blocked by a subset of $\mathcal{A}_{\gamma}$ containing at least one non-ambiguous vertex.
	\end{restatable}
	\begin{example}
		For example, in Figure~\ref{fig:example_non_identifiable_cond2c_FTCG2},  $\pi_1^f = \langle X_{t-1}, X_{t-2}, Y_{t-1}, Y_{t} \rangle$ is not an ambiguous path between $X_{t-1}$ and $Y_t$ since $X_{t-2}$ precedes $X_{t-1}$ in time. On the other hand, $\pi_2^f = \langle X_{t-1}, Y_{t-1}, Y_{t} \rangle$ is an ambiguous path between $X_{t-1}$ and $Y_t$. The path $\pi_1^f $ is blocked by $X_{t-2}$. 
	\end{example}
	
	We now introduce the notion of compatible path that will allow us to relate backdoor paths in a given SCG and its candidate FTCGs.
	
	\begin{definition}[Compatible path]
		Consider an SCG $\mathcal{G}^s$, a candidate FTCG $\mathcal{G}^f$, and the total effect $P(y_t\mid do(x_{t-\gamma}))$, for $\gamma \geq 0$. 
		We say that a path $\pi^f=\langle X_{t-\gamma}, W^2_{t^2}, \cdots, W^{m-1}_{t^{m-1}}, Y_t \rangle$ in $\mathcal{G}^f$ is \emph{compatible} with a path $\pi^s=\langle X, V^2, \cdots, V^{n-1}, Y\rangle$ in $\mathcal{G}^s$ if for all 
		$(W^j_{t^j})_{2\leq j \leq m-1}$: either $W^j\in \langle V^2, \cdots, V^{n-1}\rangle$ or $\exists V \in \langle V^2, \cdots, V^{n-1}\rangle$ such that $W^j\in Cycles(V, \mathcal{G}^s)\backslash Cycles(X, \mathcal{G}^s)$.
	\end{definition}
	
	The following property relates backdoor paths in a given SCG and  in any of its candidate FTCG. 
\begin{restatable}{property}{mypropertythree}
	\label{property:compatible_paths}
	Consider an SCG $\mathcal{G}^s$ \textcolor{purple}{such that $X\in Anc(Y, \mathcal{G}^s)$} and the total effect $P(y_t\mid do(x_{t-\gamma}))$ for $\gamma \geq 0$. Then $(i) \Rightarrow (ii)$, where: 
	\begin{itemize}
				\item[(i)] ($\gamma = 0$ or $Cycles^>(X, \mathcal{G}^s\backslash \{Y\}) = \emptyset$) \textcolor{purple}{and $\nexists \sigma$-active backdoor path $\pi^s = \langle V^1=X, \cdots, V^n=Y\rangle$ from $X$ to $Y$ in $\mathcal{G}^s$ such that $\langle V^2, \cdots, V^{n-1}\rangle\subseteq Desc(X, \mathcal{G}^s)$},
		\item[(ii)] in any candidate FTCG $\mathcal{G}^f$, there exists no backdoor path $\pi^f\in \Pi^f_{\gamma}$ 
		that is not compatible with any backdoor path in $\mathcal{G}^s$.
	\end{itemize}
\end{restatable}

	The two above properties allow one to prove the following lemmas which prove that each condition of Theorem~\ref{Thm:identification_summary} is sufficient. The first lemma is rather straightforward and concern the case where $X\not\in Anc(Y,\mathcal{G}^s)$ for a given SCG $\mathcal{G}^s$.
	
	\begin{restatable}{lemma}{mylemmafive}
		\label{lemma:1}
		Consider an SCG $\mathcal{G}^s$, $\gamma \geq0$ fixed and the total effect $P(y_t\mid do(x_{t-\gamma}))$.
		If $X\not\in Anc(Y,\mathcal{G}^s)$ then $P(y_t\mid do(x_{t-\gamma}))$ is identifiable, and $P(y_t\mid do(x_{t-\gamma})) = P(y_t)$.
	\end{restatable}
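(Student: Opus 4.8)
The plan is to show that the hypothesis $X\notin\mathrm{Anc}(Y,\mathcal{G}^s)$ forces $Y_t$ to be a non‑descendant of $X_{t-\gamma}$ in \emph{every} candidate FTCG, and then to invoke the standard do‑calculus fact that intervening on a non‑ancestor of the response has no effect, so that the single do‑free formula $P(y_t)$ works uniformly over $\mathcal{C}(\mathcal{G}^s)$.

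First I would establish a ``projection'' claim: every directed edge $A_{t'}\rightarrow B_{t''}$ of a candidate FTCG $\mathcal{G}^f\in\mathcal{C}(\mathcal{G}^s)$ induces the edge $A\rightarrow B$ of $\mathcal{G}^s$. This is immediate from Definition~\ref{Summary_G}, using that all lags occurring in $\mathcal{G}^f$ lie in $[0,\gamma_{\max}]$ by Assumption~\ref{ass:Consistency_Time} (a self‑edge $X_{t-\gamma}\rightarrow X_t$ with $\gamma>0$ still projects onto the self‑loop $X\rightarrow X$). Consequently, any directed path from $X_{t-\gamma}$ to $Y_t$ in $\mathcal{G}^f$ projects onto a directed walk from $X$ to $Y$ in $\mathcal{G}^s$, and a directed walk from $X$ to $Y$ yields $X\in\mathrm{Anc}(Y,\mathcal{G}^s)$ — contradicting the hypothesis. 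Hence no such directed path exists in $\mathcal{G}^f$, i.e. $Y_t\notin\mathrm{Desc}(X_{t-\gamma},\mathcal{G}^f)$ for every $\mathcal{G}^f\in\mathcal{C}(\mathcal{G}^s)$.

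Next, fix a candidate FTCG $\mathcal{G}^f$, which under Assumption~\ref{ass:cs} is a valid causal DAG over all the relevant variables, and run the mutilated‑graph argument. In $\mathcal{G}^f_{\overline{X_{t-\gamma}}}$, obtained by deleting all arrows into $X_{t-\gamma}$, the vertex $X_{t-\gamma}$ is a source, so the only paths leaving it that are active given the empty set are directed paths; since $Y_t\notin\mathrm{Desc}(X_{t-\gamma},\mathcal{G}^f)=\mathrm{Desc}(X_{t-\gamma},\mathcal{G}^f_{\overline{X_{t-\gamma}}})$, none of them reaches $Y_t$, whence $X_{t-\gamma}\indep Y_t$ in $\mathcal{G}^f_{\overline{X_{t-\gamma}}}$ given $\emptyset$. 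By Rule~3 of do‑calculus this gives, as an identity in the causal model associated with $\mathcal{G}^f$, $P(y_t\mid do(x_{t-\gamma}))=P(y_t)$ (equivalently, one may directly cite the standard fact of \citet{Pearl_2000} that $P(y\mid do(x))=P(y)$ whenever $Y$ is not a descendant of $X$).

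Finally, since $P(y_t)$ is a do‑free functional of the observed positive distribution and the identity $P(y_t\mid do(x_{t-\gamma}))=P(y_t)$ holds in \emph{every} candidate FTCG — in particular in the true one — the total effect is identifiable in $\mathcal{G}^s$ and equals $P(y_t)$. I do not expect a real obstacle here; the only point requiring any care is the ancestrality transfer from $\mathcal{G}^f$ to $\mathcal{G}^s$ (correctly handling repeated vertices and self‑loops in the projected walk), after which the do‑calculus step is immediate and manifestly uniform over $\mathcal{C}(\mathcal{G}^s)$.
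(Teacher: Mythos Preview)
Your proposal is correct and follows essentially the same route as the paper's proof: show that $X\notin\mathrm{Anc}(Y,\mathcal{G}^s)$ implies $X_{t-\gamma}\notin\mathrm{Anc}(Y_t,\mathcal{G}^f)$ in every candidate FTCG, then apply the standard do‑calculus fact that intervening on a non‑ancestor leaves the marginal unchanged. You simply spell out the ancestrality transfer (via edge projection and the walk‑to‑path reduction) that the paper asserts in one line.
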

	
	The following lemma excludes both Conditions 1 and 2 of Theorem~\ref{Thm:identification_summary} by considering the negation of Condition 1 (in (i)) and the situation in which there is no $\sigma$-active backdoor path from $X$ to $Y$ with $\mathcal{Z} = \emptyset$.
	
	\begin{restatable}{lemma}{mylemmasix}
		\label{lemma:2}
		Consider an SCG $\mathcal{G}^s$, $\gamma \geq0$ fixed and the total effect $P(y_t\mid do(x_{t-\gamma}))$.
		If $X\in Anc(Y,\mathcal{G}^s)$ and 
		\begin{itemize}
			\item[(i)] either $\gamma=0$ or $Cycles^>(X, \mathcal{G}^s\backslash \{Y\}) = \emptyset$ and
			\item[(ii)] $\nexists \sigma$-active backdoor path $\pi^s = \langle V^1=X, \cdots, V^n=Y\rangle$ from $X$ to $Y$ in $\mathcal{G}^s$ such that $\langle V^2, \cdots, V^{n-1}\rangle\subseteq Desc(X, \mathcal{G}^s)$, 
		\end{itemize}
		then $P(y_t\mid do(x_{t-\gamma}))$ is identifiable by $\mathcal{A}_\gamma$.
	\end{restatable}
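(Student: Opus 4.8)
The plan is to show that under hypotheses (i) and (ii), the set $\mathcal{A}_\gamma$ defined in Equation~\eqref{AgammaSCG} satisfies the backdoor criterion over all candidate FTCGs relative to $(X_{t-\gamma}, Y_t)$; the conclusion then follows immediately from the corollary deduced from \cite[Theorem~1]{Pearl_1995}. So there are exactly two things to verify: that $\mathcal{A}_\gamma$ contains no descendant of $X_{t-\gamma}$ in any candidate FTCG, and that $\mathcal{A}_\gamma$ blocks every backdoor path between $X_{t-\gamma}$ and $Y_t$ in every candidate FTCG.

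For the descendant condition, I would argue as follows. By construction $\mathcal{A}_\gamma$ only contains vertices of the form $Z_{t-\gamma-\ell}$ with $\ell \geq 1$ when $Z \in \mathrm{Desc}(X,\mathcal{G}^s)$, and $Z_{t-\gamma-\ell}$ with $\ell \geq 0$ when $Z \notin \mathrm{Desc}(X,\mathcal{G}^s)$. A vertex strictly in the past of $X_{t-\gamma}$, i.e. at a time slice $t-\gamma-\ell$ with $\ell\geq 1$, can never be a descendant of $X_{t-\gamma}$ in any FTCG, since all edges respect the time order (an instantaneous edge stays within a slice and a lagged edge goes forward in time). For the vertices $Z_{t-\gamma}$ in the same slice as $X_{t-\gamma}$: these only enter $\mathcal{A}_\gamma$ when $Z\notin\mathrm{Desc}(X,\mathcal{G}^s)$, and if $X_{t-\gamma}$ had a directed path to $Z_{t-\gamma}$ in some candidate FTCG, that path would project onto a directed path from $X$ to $Z$ in $\mathcal{G}^s$ (by the definition of the SCG as an abstraction), contradicting $Z\notin\mathrm{Desc}(X,\mathcal{G}^s)$. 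Hence condition (ii) of the backdoor criterion over all candidate FTCGs holds.

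For the blocking condition, fix a candidate FTCG $\mathcal{G}^f$ and a backdoor path $\pi^f$ between $X_{t-\gamma}$ and $Y_t$. If $\pi^f\notin\Pi^f_\gamma$, then by Property~\ref{property:non_ambiguous_identif} it is already blocked by a subset of $\mathcal{A}_\gamma$, so we are done. So suppose $\pi^f\in\Pi^f_\gamma$ is an ambiguous path. Here I invoke Property~\ref{property:compatible_paths}: hypotheses (i) and (ii) of the present lemma are exactly item (i) of that property (note $X\in\mathrm{Anc}(Y,\mathcal{G}^s)$ is part of our hypothesis and is the standing assumption of the property), so item (ii) gives that $\pi^f$ \emph{is} compatible with some backdoor path $\pi^s = \langle X, V^2, \ldots, V^{n-1}, Y\rangle$ in $\mathcal{G}^s$. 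Now I would combine two facts. First, $\pi^s$ must be $\sigma$-blocked (blocked by $\emptyset$), because otherwise $\pi^s$ would be a $\sigma$-active backdoor path; either it has an internal vertex, contradicting the compatibility-derived structure together with (ii) of the lemma (if all internal vertices are descendants of $X$), or — and this is the delicate part — one must rule out a $\sigma$-active $\pi^s$ whose internal vertices are not all descendants of $X$. I expect the resolution is that whenever $\pi^s$ is $\sigma$-active with some non-descendant internal vertex, the corresponding FTCG path $\pi^f$ actually passes through a past slice vertex (contradicting $\pi^f\in\Pi^f_\gamma$) or is blocked by such a vertex in $\mathcal{A}_\gamma$, because the "detour through a cycle $Cycles(V,\mathcal{G}^s)$" appearing in the compatibility definition forces a lagged edge and hence a vertex earlier than $t-\gamma$. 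Second, once $\pi^s$ is $\sigma$-blocked, it has a collider $\to V^k \leftarrow$ with no conditioning; I need to transfer this collider structure back to $\pi^f$ to conclude $\pi^f$ is blocked by $\mathcal{A}_\gamma$ — specifically, showing either $\pi^f$ has a collider whose descendants all avoid $\mathcal{A}_\gamma$, or $\pi^f$ hits a vertex of $\mathcal{A}_\gamma$ that is a definite non-collider on it.

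The main obstacle, I anticipate, is this last transfer step: the compatibility relation is loose enough that a single edge of $\pi^s$ can correspond to a long sub-path of $\pi^f$ routed through a cycle, so a collider on $\pi^s$ need not sit at a single vertex of $\pi^f$ with the same orientations. Handling this cleanly will require a careful case analysis on how cycle-detours interact with the time index $t-\gamma$: a detour that leaves and returns to the present slice is impossible in an acyclic FTCG, so any such detour must dip into a past slice, producing either a blocking non-ambiguous vertex in $\mathcal{A}_\gamma$ (contradicting $\pi^f\in\Pi^f_\gamma$) or relocating the collider onto an admissible vertex. I would organize the argument by first disposing of the $\gamma=0$ branch of (i) (where no past-slice subtlety for $X$ arises because any backdoor out of $X_t$ through a cycle must still eventually use a forward edge), then the $Cycles^>(X,\mathcal{G}^s\backslash\{Y\})=\emptyset$ branch (which prevents $X_{t-\gamma}$ itself from being re-entered along $\pi^f$ via a cycle not passing through $Y$), and finally the generic collider-transfer.
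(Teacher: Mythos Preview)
Your descendant check and the use of Property~\ref{property:non_ambiguous_identif} for non-ambiguous paths are correct and match the paper. The gap is in the ambiguous case. You try to establish that the compatible $\pi^s$ must be $\sigma$-blocked and then transfer a collider back to $\pi^f$; but hypothesis (ii) does not forbid all $\sigma$-active backdoor paths in $\mathcal{G}^s$, only those whose internal vertices \emph{all} lie in $Desc(X,\mathcal{G}^s)$. A $\sigma$-active $\pi^s$ with some internal $V\notin Desc(X,\mathcal{G}^s)$ is perfectly consistent with (ii), and your speculated resolution (``$\pi^f$ must dip into a past slice because cycle detours force lagged edges'') fails on both counts: lagged edges point \emph{forward} in time, so a detour produces later---not earlier---time stamps, and in any case no detour need occur. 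Concretely, take the SCG of Figure~\ref{fig:example_maybe_identifiable:SCG_1} with $\gamma=0$: the path $\langle X,Z,W,Y\rangle$ is $\sigma$-active with $W\notin Desc(X,\mathcal{G}^s)$, and the compatible FTCG backdoor path $X_t\leftarrow Z_t\leftarrow W_t\to Y_t$ sits entirely at slice $t$, with no cycle detour and no past-slice vertex.

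The paper handles exactly this ``delicate'' case directly, and its argument is simpler than any collider transfer. When $\pi^s$ is $\sigma$-active, hypothesis (ii) hands you an internal vertex $V\notin Desc(X,\mathcal{G}^s)$. Now look at the second clause of $\mathcal{A}_\gamma$ in Equation~\eqref{AgammaSCG}: for $Z\notin Desc(X,\mathcal{G}^s)$ it includes $Z_{t-\gamma-\ell}$ for all $\ell\ge 0$, in particular $Z_{t-\gamma}$ at the \emph{same} slice as $X_{t-\gamma}$. Hence $V_{t-\gamma}\in\mathcal{A}_\gamma$; and since $V\notin Desc(X,\mathcal{G}^s)$ forces $V_{t-\gamma}\notin Desc(X_{t-\gamma},\mathcal{G}^f)$ in every candidate FTCG (and similarly $V_{t-\gamma}\notin Desc(Y_t,\mathcal{G}^f)$ via $X\in Anc(Y,\mathcal{G}^s)$), this vertex is non-ambiguous, and it---or its parent on $\pi^f$---is a non-collider in $\mathcal{A}_\gamma$ that blocks $\pi^f$. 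The whole collider-transfer programme becomes unnecessary once you see that $\mathcal{A}_\gamma$ was built to contain same-slice non-descendants for precisely this purpose.
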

	This lemma is illustrated in Figure \ref{fig:example_maybe_identifiable:SCG_1} - \ref{fig:example_maybe_identifiable:SCG_2}.
	
When there is a $\sigma$-active backdoor path from $X$ to $Y$ with $\mathcal{Z} = \emptyset$, the negation of Condition 2 of Theorem~\ref{Thm:identification_summary} is obtained with $n=2$, $\gamma =1$ and \textcolor{purple}{$Cycles(Y, \mathcal{G}^s)\backslash\{X\leftrightarrows Y\} = \emptyset$}. The negation of Condition 1 of Theorem~\ref{Thm:identification_summary} is obtained in this setting with $Cycles^>(X, \mathcal{G}^s\backslash \{Y\}) = \emptyset$. Note that,  having a $\sigma$-active backdoor path from $X$ to $Y$ with $\mathcal{Z} = \emptyset$, $n=2$ and \textcolor{purple}{$Cycles(Y, \mathcal{G}^s)\backslash\{X\leftrightarrows Y\} = \emptyset$} with $X\in Anc(Y, \mathcal{G}^s)$ is equivalent to $X\leftrightarrows Y$.
	
	\begin{restatable}{lemma}{mylemmaseven}
		\label{lemma:3}
	\textcolor{purple}{Consider an SCG $\mathcal{G}^s$ and the total effect $P(y_t\mid do(x_{t-1}))$ ($\gamma=1$). 
	If the only $\sigma$-active backdoor path $\pi^s = \langle V^1=X, \cdots, V^n=Y\rangle$ with $\mathcal{Z} = \emptyset$ from $X$ to $Y$ in $\mathcal{G}^s$ such that $\langle V^2, \cdots, V^{n-1}\rangle\subseteq Desc(X, \mathcal{G}^s)$  is $X\leftrightarrows Y \in \mathcal{G}^s$ and} 
		\begin{itemize}
			\item[(i)] $Cycles^>(X, \mathcal{G}^s\backslash \{Y\}) = \emptyset$ and
		\item[(ii)] \textcolor{purple}{$Cycles(Y, \mathcal{G}^s)\backslash\{X\leftrightarrows Y\} = \emptyset$},
		\end{itemize}
		then $P(y_t\mid do(x_{t-1}))$ is identifiable by $\mathcal{A}_\gamma$.
	\end{restatable}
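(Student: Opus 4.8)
The goal is to show that in the remaining edge case --- $\gamma = 1$, the only $\sigma$-active backdoor path from $X$ to $Y$ staying inside $Desc(X,\mathcal{G}^s)$ is the two-cycle $X \leftrightarrows Y$, no non-trivial cycle through $X$ survives deleting $Y$, and no cycle through $Y$ survives deleting the $X$-$Y$ two-cycle --- the set $\mathcal{A}_\gamma$ still satisfies the backdoor criterion over all candidate FTCGs relative to $(X_{t-1}, Y_t)$. By the corollary deduced from \citet[Theorem~1]{Pearl_1995}, this gives identifiability by adjustment on $\mathcal{A}_\gamma$. The plan is to verify the two clauses of the backdoor criterion over all candidate FTCGs separately: (ii) that $\mathcal{A}_1$ contains no descendant of $X_{t-1}$ in any candidate FTCG, and (i) that $\mathcal{A}_1$ blocks every backdoor path between $X_{t-1}$ and $Y_t$ in every candidate FTCG.

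For clause (ii), I would argue as in the proofs of Lemmas~\ref{lemma:1}--\ref{lemma:2}: a variable $Z_{t-1-\ell}$ with $\ell \ge 1$ can never be a descendant of $X_{t-1}$ because descendants cannot precede their ancestor in time (acyclicity of the FTCG across time stamps), so the first block of $\mathcal{A}_1$ is safe automatically; for the second block, $Z_{t-1}$ with $Z \notin Desc(X,\mathcal{G}^s)$, if $Z_{t-1}$ were a descendant of $X_{t-1}$ in some candidate FTCG then $Z$ would be a descendant of $X$ in the SCG, a contradiction. Hence clause (ii) holds unconditionally; the content of the lemma is entirely in clause (i).

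For clause (i), fix a candidate FTCG $\mathcal{G}^f$ and a backdoor path $\pi^f$ from $X_{t-1}$ to $Y_t$. If $\pi^f \notin \Pi^f_1$ (not ambiguous), Property~\ref{property:non_ambiguous_identif} already gives that $\mathcal{A}_1$ blocks it. So assume $\pi^f \in \Pi^f_1$ is ambiguous. I would then split on whether $\pi^f$ is compatible with some backdoor path $\pi^s$ in $\mathcal{G}^s$. The key step is to show that under the hypotheses, every ambiguous backdoor $\pi^f$ that is \emph{not} compatible with any SCG backdoor path would force the existence of one of the excluded structures. Concretely: an incompatible ambiguous path $\pi^f$ projects down to a walk in $\mathcal{G}^s$ that, because it does not ``factor through'' cycles of descendant vertices, must either produce a $\sigma$-active backdoor path in $Desc(X,\mathcal{G}^s)$ longer than the two-cycle (excluded by the hypothesis isolating $X\leftrightarrows Y$ and by condition (i) via $Cycles^>(X,\mathcal{G}^s\backslash\{Y\}) = \emptyset$), or it uses a cycle through $Y$ other than $X\leftrightarrows Y$ (excluded by condition (ii)). This is essentially the contrapositive content packaged into Property~\ref{property:compatible_paths}, but here I cannot invoke that property directly because its hypothesis~(i) assumes there is \emph{no} $\sigma$-active backdoor path of the relevant kind, whereas in Lemma~\ref{lemma:3} exactly one such path, $X\leftrightarrows Y$, is present; so I expect to need a variant of the argument of Property~\ref{property:compatible_paths} that tolerates the single two-cycle $X\leftrightarrows Y$ and uses conditions (i) and (ii) to absorb its effect. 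Once every ambiguous backdoor $\pi^f$ is known to be compatible with some backdoor $\pi^s$ in $\mathcal{G}^s$, I would argue as follows: the only candidate backdoor $\pi^s$ available (since $X\in Anc(Y,\mathcal{G}^s)$ and the $\sigma$-active ones in $Desc(X,\mathcal{G}^s)$ reduce to $X\leftrightarrows Y$, with condition (i) killing the cycle-through-$X$ route and condition (ii) killing the cycle-through-$Y$ route) is $X\leftrightarrows Y$ itself; an ambiguous $\pi^f$ compatible with $X\leftrightarrows Y$ must pass through a vertex $W^j_{t^j}$ with $W^j \in Cycles(Y,\mathcal{G}^s)\setminus Cycles(X,\mathcal{G}^s)$ or with $W^j \in \{X,Y\}$, and in the former case condition (ii) forces this cycle to be empty, contradiction, while in the latter case one checks directly that $\gamma = 1$ makes the lagged parents collected in the first block of $\mathcal{A}_1$ block $\pi^f$ through the copy of $X$ appearing on it (this is the special role of $\gamma = 1$, and parallels why condition~2(b), $\gamma \ne 1$, is the failure case).

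The main obstacle I anticipate is exactly the interplay in the previous paragraph: Property~\ref{property:compatible_paths} as stated does not apply because its hypothesis (i) explicitly forbids the backdoor path $X\leftrightarrows Y$ which is present here, so I must either re-derive a version of Property~\ref{property:compatible_paths} ``relative to $X\leftrightarrows Y$'' or carefully trace, in each candidate FTCG, how the two-cycle $X\leftrightarrows Y$ in the SCG can and cannot be realized. The delicate point is that $X\leftrightarrows Y$ in the SCG means some candidate FTCGs have $X_{t-k}\to Y_t$ and some have $Y_{t-k'}\to X_t$, and the $do(x_{t-1})$ intervention is sensitive to whether the ``$Y$ causes $X$'' direction is realized at lag exactly $1$; conditions (i) and (ii) are precisely what guarantee that no ambiguous backdoor can exploit this, and making that guarantee rigorous --- rather than merely plausible --- is where the real work lies. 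Everything else (clause (ii), the non-ambiguous case, the bookkeeping for $\mathcal{A}_\gamma$) is routine and parallels Lemma~\ref{lemma:2}.
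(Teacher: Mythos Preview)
Your overall architecture matches the paper's: dispose of clause~(ii) of the backdoor criterion by the time-ordering/non-descendant argument, handle non-ambiguous backdoor paths via Property~\ref{property:non_ambiguous_identif}, and split the ambiguous ones by compatibility, with $X\leftrightarrows Y$ singled out as the exceptional compatible SCG path. You are also right to flag that Property~\ref{property:compatible_paths} does not apply verbatim; the paper in effect uses its proof idea rather than its statement, and handles the ``incompatible but induced by $X\leftrightarrows Y$'' sub-case separately by observing that such a path begins $X_{t-1}\leftarrow W_{t-1}\cdots$ with $W$ a parent of $X$ not in $Desc(X,\mathcal{G}^s)$ by~(i), whence $W_{t-1}\in\mathcal{A}_1$.

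There is, however, a genuine gap in your final step. For ambiguous $\pi^f$ compatible with $\pi^s=\langle X,Y\rangle$ you write that ``the lagged parents collected in the first block of $\mathcal{A}_1$ block $\pi^f$ through the copy of $X$ appearing on it.'' This cannot work: the first block of $\mathcal{A}_1$ consists of vertices $Z_{t-1-\ell}$ with $\ell\ge 1$, i.e.\ at time $\le t-2$, and by definition an ambiguous path in $\Pi^f_1$ contains no such vertex. No member of $\mathcal{A}_1$ lies on an ambiguous $\pi^f$ compatible with $\langle X,Y\rangle$, so $\mathcal{A}_1$ cannot block it by containment. The paper's mechanism is different and is where $\gamma=1$ genuinely enters: condition~(ii) forbids a self-loop on $Y$, so the only ambiguous path compatible with $\langle X,Y\rangle$ that uses $Y_{t-1}$ is $\langle X_{t-1},Y_{t-1},X_t,Y_t\rangle$; by consistency throughout time and acyclicity the only two orientations are $X_{t-1}\to Y_{t-1}\to X_t\to Y_t$ (a directed path, not a backdoor) and $X_{t-1}\leftarrow Y_{t-1}\to X_t\leftarrow Y_t$ (a backdoor already blocked by the \emph{collider} at $X_t$, which $\mathcal{A}_1$ does not reopen because neither $X_t$ nor any of its descendants lies in $\mathcal{A}_1$). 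That collider-versus-directed dichotomy is the crux, and it is exactly what breaks when $\gamma\ne 1$.

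A smaller point: your claim that ``the only candidate backdoor $\pi^s$ available \ldots\ is $X\leftrightarrows Y$ itself'' is too strong. There may well be other $\sigma$-active backdoor paths in $\mathcal{G}^s$ with some intermediate vertex outside $Desc(X,\mathcal{G}^s)$; ambiguous $\pi^f$ compatible with those are handled by re-running the argument of Lemma~\ref{lemma:2} (the non-descendant intermediate vertex furnishes a non-ambiguous blocker in $\mathcal{A}_1$), and the paper invokes Lemma~\ref{lemma:2} explicitly for this sub-case before isolating $X\leftrightarrows Y$.
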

	This lemma is illustrated in Figure \ref{fig:example_maybe_identifiable:SCG_3}.
	
	\subsection{Adjustment set}
	When the total effect is identifiable and when $X\in Anc(Y, \mathcal{G}^s)$, the set $A_{\gamma}$ defined in Equation \eqref{AgammaSCG} is a valid adjustment set, but it has a large size, so we provide a smaller valid adjustment set, defined as follows:
	\begin{equation*}
		{\mathcal{A}}'_{\gamma} = \{ V_{t'}\in \mathcal{A}_{\gamma} |V \in Anc(X, \mathcal{G}^s) \cup Anc(Y, \mathcal{G}^s)\}.
	\end{equation*}
	\begin{restatable}{proposition}{mypropositiontwo}
		Consider an SCG $\mathcal{G}^s$ and the total effect $P(y_t\mid do(x_{t-\gamma}))$, with $\gamma \geq 0$. Under conditions of identifiability provided by Theorem~\ref{Thm:identification_summary}, the set $ {\mathcal{A}}'_{\gamma}$ is a valid adjustment set for the total effect. 
	\end{restatable}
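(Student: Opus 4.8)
The plan is to establish that $\mathcal{A}'_\gamma$ satisfies the backdoor criterion over all candidate FTCGs relative to $(X_{t-\gamma},Y_t)$; soundness of that criterion (the corollary deduced earlier from \citet[Theorem~1]{Pearl_1995}) then yields both identifiability and the validity of $\mathcal{A}'_\gamma$ as an adjustment set. One regime is degenerate and is treated directly: if $X\notin\mathrm{Anc}(Y,\mathcal{G}^s)$ and $\gamma=0$, then $Y\notin\mathrm{Desc}(X,\mathcal{G}^s)$, so the $\ell=0$ copy $Y_t$ of the time series $Y$ belongs to $\mathcal{A}_\gamma$, and since $Y\in\mathrm{Anc}(Y,\mathcal{G}^s)$ we get $Y_t\in\mathcal{A}'_\gamma$; conditioning on $Y_t$ makes the right-hand side of Equation~\eqref{eq:adjustment_formula} collapse to $P(y_t)$, which by Lemma~\ref{lemma:1} equals $P(y_t\mid do(x_{t-\gamma}))$, so $\mathcal{A}'_\gamma$ is valid. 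In every other regime (i.e.\ $X\in\mathrm{Anc}(Y,\mathcal{G}^s)$, or $X\notin\mathrm{Anc}(Y,\mathcal{G}^s)$ with $\gamma\geq1$), the proof of Theorem~\ref{Thm:identification_summary} (Lemmas~\ref{lemma:1}--\ref{lemma:3}) already shows that $\mathcal{A}_\gamma$ satisfies the backdoor criterion over all candidate FTCGs, and the task reduces to transferring this to the subset $\mathcal{A}'_\gamma$. Condition (ii) transfers for free: $\mathcal{A}'_\gamma\subseteq\mathcal{A}_\gamma$, and no element of $\mathcal{A}_\gamma$ can be a descendant of $X_{t-\gamma}$ in a candidate FTCG, since each such element is either strictly earlier than $t-\gamma$ (hence unreachable from $X_{t-\gamma}$ along the forward-in-time edges of an FTCG) or belongs to a time series outside $\mathrm{Desc}(X,\mathcal{G}^s)$, whereas a directed path $X_{t-\gamma}\to\cdots\to Z_{t'}$ in a candidate FTCG projects slice by slice onto a directed walk $X\to\cdots\to Z$ in $\mathcal{G}^s$.

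For condition (i), fix a candidate FTCG $\mathcal{G}^f$ and assume, for contradiction, that some backdoor path $\pi^f$ between $X_{t-\gamma}$ and $Y_t$ is active given $\mathcal{A}'_\gamma$. Since $\mathcal{A}_\gamma$ blocks $\pi^f$, either some collider of $\pi^f$ has no descendant in $\mathcal{A}_\gamma$ --- impossible, because every collider of $\pi^f$ has a descendant in $\mathcal{A}'_\gamma\subseteq\mathcal{A}_\gamma$ --- or some non-collider $W=V_{t'}$ of $\pi^f$ lies in $\mathcal{A}_\gamma$; the latter must hold, and as $\pi^f$ is active given $\mathcal{A}'_\gamma$ every non-collider of $\pi^f$ is outside $\mathcal{A}'_\gamma$, so $W\in\mathcal{A}_\gamma\setminus\mathcal{A}'_\gamma$, i.e.\ $V\notin\mathrm{Anc}(X,\mathcal{G}^s)\cup\mathrm{Anc}(Y,\mathcal{G}^s)$. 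Now follow $\pi^f$ away from $W$ along an outgoing edge of $W$ (one exists, $W$ being a non-collider), continuing in the direction of the arrows; being finite, $\pi^f$ leads us either to an endpoint ($Y_t$, or $X_{t-\gamma}$, which is possible since $\pi^f$ is a backdoor path and so has an arrowhead into $X_{t-\gamma}$) or to the first collider $C$ reached in this way. In the endpoint case, $W\in\mathrm{Anc}(\{X_{t-\gamma}\}\cup\{Y_t\},\mathcal{G}^f)$, so projecting to $\mathcal{G}^s$ gives $V\in\mathrm{Anc}(X,\mathcal{G}^s)\cup\mathrm{Anc}(Y,\mathcal{G}^s)$, a contradiction. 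In the collider case, $W\in\mathrm{Anc}(C,\mathcal{G}^f)$, and since $\pi^f$ is active given $\mathcal{A}'_\gamma$, $C$ has a descendant $D\in\mathcal{A}'_\gamma$; writing $U$ for the time series of $D$, we have $U\in\mathrm{Anc}(X,\mathcal{G}^s)\cup\mathrm{Anc}(Y,\mathcal{G}^s)$ by definition of $\mathcal{A}'_\gamma$, while $W\in\mathrm{Anc}(C,\mathcal{G}^f)\subseteq\mathrm{Anc}(D,\mathcal{G}^f)$ projects to $V\in\mathrm{Anc}(U,\mathcal{G}^s)$, and transitivity of ancestry in $\mathcal{G}^s$ yields $V\in\mathrm{Anc}(X,\mathcal{G}^s)\cup\mathrm{Anc}(Y,\mathcal{G}^s)$, again a contradiction. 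Hence no such $\pi^f$ exists, $\mathcal{A}'_\gamma$ blocks every backdoor path in every candidate FTCG, and soundness of the backdoor criterion over all candidate FTCGs concludes.

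The crux is the collider case. The classical pruning lemma ``restrict a valid adjustment set to the ancestors of $\{X,Y\}$'' uses ancestry computed in the graph at hand, whereas $\mathcal{A}'_\gamma$ is pruned by ancestry in the \emph{summary} graph, so one cannot directly claim that the descendant $D$ witnessing an active collider is an ancestor of $X_{t-\gamma}$ or $Y_t$ in $\mathcal{G}^f$. What saves the coarser restriction is precisely that every directed path in a candidate FTCG projects onto a directed walk in $\mathcal{G}^s$ --- so FTCG-ancestry always entails SCG-ancestry --- together with transitivity of ancestry in $\mathcal{G}^s$. A secondary point to verify is that the proofs of Lemmas~\ref{lemma:1}--\ref{lemma:3} genuinely supply $\mathcal{A}_\gamma$ as a set meeting the backdoor criterion over all candidate FTCGs (not merely the numerical identity~\eqref{eq:adjustment_formula}) in every regime besides the degenerate one isolated above, since the pruning argument is purely graphical and needs that stronger statement as input.
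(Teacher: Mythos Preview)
Your proof is correct and takes a genuinely different route from the paper's. The paper argues vertex by vertex: it fixes $V_{t'}\in\mathcal{A}_\gamma\setminus\mathcal{A}'_\gamma$, observes that $V_{t'}\notin\mathrm{Anc}(X_{t-\gamma},\mathcal{G}^f)\cup\mathrm{Anc}(Y_t,\mathcal{G}^f)$ so it lies on no naturally active backdoor path, and then does a case analysis on the length of any backdoor path that might be activated by conditioning (lengths $3$, $4$, $\geq 5$), concluding in each case that either the path does not exist or is blocked by $\mathcal{A}'_\gamma$. You instead run the classical pruning argument directly at the path level: assume some backdoor path is active given $\mathcal{A}'_\gamma$, locate a non-collider in $\mathcal{A}_\gamma\setminus\mathcal{A}'_\gamma$, and follow its outgoing edge to an endpoint or an opened collider, in either case obtaining SCG-ancestry to $X$ or $Y$ by projecting the resulting FTCG-directed path. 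Your version is more transparent and avoids the length case split; it also sidesteps a subtlety in the paper's presentation (removing vertices one at a time versus all at once). What your write-up makes explicit, and the paper leaves implicit, is the degenerate regime $X\notin\mathrm{Anc}(Y,\mathcal{G}^s)$ with $\gamma=0$, where $Y_t$ itself lands in $\mathcal{A}'_\gamma$ and the backdoor criterion does not literally apply; your direct computation showing the adjustment formula collapses to $P(y_t)$ is the right fix. The secondary point you flag (that Lemma~\ref{lemma:1}'s proof does not mention $\mathcal{A}_\gamma$) is easily discharged from the paper's remark, just after the definition of $\mathcal{A}_\gamma$, that it contains all possible parents of $X_{t-\gamma}$ in every candidate FTCG and hence blocks every backdoor path at the first step.
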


	\subsection{Non identifiable examples}\label{sec:nec}
	In this section, we provide several examples of SCGs where the total effect cannot be identified by finding a valid adjustment set.
	
	\begin{figure}[t!]
\centering
 \begin{subfigure}{.45\textwidth}
 \centering
	\begin{tikzpicture}[{black, circle, draw, inner sep=0}]
	\tikzset{nodes={draw,rounded corners},minimum height=0.6cm,minimum width=0.6cm}	
	\tikzset{anomalous/.append style={fill=easyorange}}
	\tikzset{rc/.append style={fill=easyorange}}
  	\node (Z) at (0.6,1.2) {$Z$};
 	\node[fill=red!30] (X) at (0,0) {$X$} ;
 	\node[fill=blue!30] (Y) at (1.2,0) {$Y$};
  \draw[->,>=latex] (X) -- (Y);
 \begin{scope}[transform canvas={xshift=.2em, yshift=-.15em}]
 \draw [->,>=latex] (X) -- (Z);
 \end{scope}
 \begin{scope}[transform canvas={xshift=-.2em, yshift=.15em}]
 \draw [<-,>=latex] (X) -- (Z);
 \end{scope} 
  \draw [<-,>=latex,] (Y) -- (Z);
	\end{tikzpicture}
 \caption{\centering An SCG $\mathcal{G}^s_1$ and the total effect $P(y_t | do(x_{t - 1}))$.}
 \label{fig:example_non_identifiable_2_SCG}
 \end{subfigure}
 \hfill 
 
 \begin{subfigure}{.23\textwidth}
 \centering
\begin{tikzpicture}[{black, circle, draw, inner sep=0}]
 \tikzset{nodes={draw,rounded corners},minimum height=0.7cm,minimum width=0.6cm, font=\scriptsize}
 \tikzset{latent/.append style={fill=gray!60}}
 
 \node  (X) at (1.2,1.2) {$X_t$};
 \node[fill=blue!30] (Y) at (1.2,0) {$Y_t$};
 \node[fill=red!30] (X-1) at (0,1.2) {$X_{t-1}$};
 \node (Y-1) at (0,0) {$Y_{t-1}$};
 \node (X-2) at (-1.2,1.2) {$X_{t-2}$};
 \node (Y-2) at (-1.2,0) {$Y_{t-2}$};
 \node (Z) at (1.2,2.4) {$Z_t$};
 \node[fill=gray!30] (Z-1) at (0,2.4) {$Z_{t-1}$};
 \node (Z-2) at (-1.2,2.4) {$Z_{t-2}$};
  \draw[->,>=latex] (Z-1) to (X-1);
  \draw[->,>=latex] (Z) to (X);
  \draw[->,>=latex] (X-1) -- (Z);
  \draw[->,>=latex] (Z-1) -- (Y);
  \draw[->,>=latex, line width=1.2pt] (X-1) -- (Y);
    \draw[->,>=latex] (Z-2) to (X-2);
  \draw[->,>=latex] (X-2) -- (Z-1);
  \draw[->,>=latex] (Z-2) -- (Y-1);
  \draw[->,>=latex] (X-2) -- (Y-1);
\draw [dashed,>=latex] (Z-2) to[left] (-2,2.4);
\draw [dashed,>=latex] (X-2) to[left] (-2,1.2);
 \draw [dashed,>=latex] (Y-2) to[left] (-2,0);
 \draw [dashed,>=latex] (Z) to[right] (2,2.4);
\draw [dashed,>=latex] (X) to[right] (2,1.2);
\draw [dashed,>=latex] (Y) to[right] (2,0);
\end{tikzpicture}
 	\caption{\centering A first candidate FTCG.}
 \label{fig:example_non_identifiable_2_FTCG1}
 \end{subfigure}
\hfill 
 \begin{subfigure}{.23\textwidth}
 \centering
 \begin{tikzpicture}[{black, circle, draw, inner sep=0}]
 \tikzset{nodes={draw,rounded corners},minimum height=0.7cm,minimum width=0.6cm, font=\scriptsize}
 \tikzset{latent/.append style={fill=gray!60}}
 
 \node  (X) at (1.2,1.2) {$X_t$};
 \node[fill=red!30] (X-1) at (0,1.2) {$X_{t-1}$};
 \node (X-2) at (-1.2,1.2) {$X_{t-2}$};
 \node[fill=blue!30] (Y) at (1.2,0) {$Y_t$};
 \node (Y-1) at (0,0) {$Y_{t-1}$};
 \node (Y-2) at (-1.2,0) {$Y_{t-2}$};
 \node (Z) at (1.2,2.4) {$Z_t$};
 \node[fill=gray!30] (Z-1) at (0,2.4) {$Z_{t-1}$};
 \node (Z-2) at (-1.2,2.4) {$Z_{t-2}$};
 \draw[->,>=latex] (X-2) to (Z-2);
 \draw[->,>=latex, line width=1.2pt] (X-1) to (Z-1);
  \draw[->,>=latex] (X) to (Z);
  \draw[->,>=latex] (Z-1) -- (X);
  \draw[->,>=latex, line width=1.2pt] (X-1) -- (Y);
 \draw[->,>=latex, line width=1.2pt] (Z-1) -- (Y);
  \draw[->,>=latex] (Z-2) -- (X-1);
  \draw[->,>=latex] (X-2) -- (Y-1);
 \draw[->,>=latex] (Z-2) -- (Y-1);

\draw [dashed,>=latex] (Z-2) to[left] (-2,2.4);
\draw [dashed,>=latex] (X-2) to[left] (-2,1.2);
 \draw [dashed,>=latex] (Y-2) to[left] (-2,0);
 \draw [dashed,>=latex] (Z) to[right] (2,2.4);
\draw [dashed,>=latex] (X) to[right] (2,1.2);
\draw [dashed,>=latex] (Y) to[right] (2,0);
\end{tikzpicture}
 	\caption{\centering Another candidate FTCG.}
 \label{fig:example_non_identifiable_2_FTCG2}
 \end{subfigure}
 \caption{An example of an SCG $\mathcal{G}^s_1$ in (a) satisfying Condition 1 in Theorem~\ref{Thm:identification_summary} and two candidate FTCGs in (b) and (c). Each pair of red and blue vertices in the FTCGs represents the total effect we are interested in. Gray vertices are ambiguous: they are on an active backdoor path in (b) and belong to a directed path in (c) (bold edges indicate direct paths from $X_{t-1}$ to $Y_t$).}
 \label{fig:example_non_identifiable_2}
\end{figure}%
	

	
	\begin{example}
		\label{example:1}
		Consider the SCG in Figure~\ref{fig:example_non_identifiable_2_SCG} and the two candidate FCTGs given in Figure~\ref{fig:example_non_identifiable_2_FTCG1}  and ~\ref{fig:example_non_identifiable_2_FTCG2}. Suppose we are interested in the total effect $P(y_t\mid do(x_{t-1}))$.
		In the first FCTG depicted in Figure~\ref{fig:example_non_identifiable_2_FTCG1}, the path $\langle X_{t-1}, Z_{t-1}, Y_t \rangle$ is an active back-door path. Since $Z_{t-1}$ is the only vertex on this path that is not an endpoint, we need to adjust for it to eliminate the confounding bias induced by this path.
		However, in the second FTCG depicted in \ref{fig:example_non_identifiable_2_FTCG2}, $\langle X_{t-1}, Z_{t-1}, Y_t \rangle$ forms a directed path. This implies that we should not adjust for $Z_{t-1}$ to preserve the influence of $X_{t-1}$ on $Y_t$ through the path passing by $Z_{t-1}$.
		Since we do not know which FTCG is the true one, then we cannot determine whether we should adjust for $Z_{t-1}$ or not. Consequently, there is no valid adjustment set to identify the total effect $P(y_t\mid do(x_{t-1}))$.
	\end{example}
	
	\begin{figure}[t!]
\centering
 \begin{subfigure}{.45\textwidth}
 \centering
	\begin{tikzpicture}[{black, circle, draw, inner sep=0}]
	\tikzset{nodes={draw,rounded corners},minimum height=0.6cm,minimum width=0.6cm}	
	\tikzset{anomalous/.append style={fill=easyorange}}
	\tikzset{rc/.append style={fill=easyorange}}
 	\node (Z) at (0.6,1.2) {$Z$};
	\node[fill=red!30] (X) at (0,0) {$X$} ;
	\node[fill=blue!30] (Y) at (1.2,0) {$Y$};
 \draw[->,>=latex] (X) -- (Y);
 \draw [->,>=latex,] (Z) -- (X);
 \draw [->,>=latex,] (Y) -- (Z);
	\draw[->,>=latex] (X) to [out=180,in=135, looseness=2] (X);
	\draw[->,>=latex] (Z) to [out=180,in=135, looseness=2] (Z);
	\end{tikzpicture}
 \caption{\centering An SCG $\mathcal{G}^s_2$ and the total effect $P(y_t | do(x_{t - 1}))$.}
 \label{fig:example_non_identifiable_cond2a_SCG}
 \end{subfigure}
 \hfill 
 \begin{subfigure}{.23\textwidth}
 \centering
 \begin{tikzpicture}[{black, circle, draw, inner sep=0}]
 \tikzset{nodes={draw,rounded corners},minimum height=0.7cm,minimum width=0.6cm, font=\scriptsize}
 \tikzset{latent/.append style={fill=gray!60}}
 
 \node[fill=gray!30]  (X) at (1.2,1.2) {$X_t$};
 \node[fill=blue!30] (Y) at (1.2,0) {$Y_t$};
 \node[fill=red!30] (X-1) at (0,1.2) {$X_{t-1}$};
 \node (Y-1) at (0,0) {$Y_{t-1}$};
 \node (X-2) at (-1.2,1.2) {$X_{t-2}$};
 \node (Y-2) at (-1.2,0) {$Y_{t-2}$};
 \node (V) at (1.2,-1.2) {$Z_t$};
 \node[fill=gray!30] (V-1) at (0,-1.2) {$Z_{t-1}$};
 \node (V-2) at (-1.2,-1.2) {$Z_{t-2}$};
 \draw[->,>=latex] (X-2) -- (X-1);
 \draw[->,>=latex, line width=1.2pt] (X-1) -- (X);
 \draw[->,>=latex] (V-2) -- (V-1);
 \draw[->,>=latex] (V-1) -- (V);
 \draw[->,>=latex, line width=1.2pt] (X) to (Y);
 \draw[->,>=latex, line width=1.2pt] (X-1) to  (Y-1);
 \draw[->,>=latex] (X-2) to (Y-2);
 \draw[->,>=latex] (V-2) to [out=35,in=-35, looseness=1] (X-2);
 \draw[->,>=latex] (V-1) to [out=35,in=-35, looseness=1] (X-1);
 \draw[->,>=latex, line width=1.2pt] (V) to [out=35,in=-35, looseness=1] (X);
 \draw[->,>=latex, line width=1.2pt] (Y-1) to (V);
 \draw[->,>=latex] (Y-2) -- (V-1);
 \draw[->,>=latex] (V-2) -- (X-1);
 \draw[->,>=latex] (V-1) -- (X);

\draw [dashed,>=latex] (V-2) to[left] (-2,-1.2);
\draw [dashed,>=latex] (X-2) to[left] (-2,1.2);
\draw [dashed,>=latex] (Y-2) to[right] (-2,0);
 \draw [dashed,>=latex] (V) to[right] (2,-1.2);
\draw [dashed,>=latex] (X) to[right] (2,1.2);
\draw [dashed,>=latex] (Y) to[right] (2,0);
\end{tikzpicture}
 	\caption{\centering A first candidate FTCG.}
 \label{fig:example_non_identifiable_cond2a_FTCG1}
 \end{subfigure}
\hfill 
 \begin{subfigure}{.23\textwidth}
 \centering
\begin{tikzpicture}[{black, circle, draw, inner sep=0}]
 \tikzset{nodes={draw,rounded corners},minimum height=0.7cm,minimum width=0.6cm, font=\scriptsize}
 \tikzset{latent/.append style={fill=gray!60}}
 
 \node[fill=gray!30]  (X) at (1.2,1.2) {$X_t$};
 \node[fill=blue!30] (Y) at (1.2,0) {$Y_t$};
 \node[fill=red!30] (X-1) at (0,1.2) {$X_{t-1}$};
 \node (Y-1) at (0,0) {$Y_{t-1}$};
 \node (X-2) at (-1.2,1.2) {$X_{t-2}$};
 \node (Y-2) at (-1.2,0) {$Y_{t-2}$};
 \node (V) at (1.2,-1.2) {$Z_t$};
 \node[fill=gray!30] (V-1) at (0,-1.2) {$Z_{t-1}$};
 \node (V-2) at (-1.2,-1.2) {$Z_{t-2}$};
 \draw[->,>=latex] (X-2) -- (X-1);
 \draw[->,>=latex, line width=1.2pt] (X-1) -- (X);
 \draw[->,>=latex] (V-2) -- (V-1);
 \draw[->,>=latex] (V-1) -- (V);
 \draw[->,>=latex, line width=1.2pt] (X) to (Y);
 \draw[->,>=latex, line width=1.2pt] (X-1) to (Y-1);
 \draw[->,>=latex] (X-2) to (Y-2);
 \draw[->,>=latex] (V-2) -- (X-1);
 \draw[->,>=latex, line width=1.2pt] (V-1) -- (X);
 \draw[->,>=latex] (X-2) to (Y-2);
 \draw[->,>=latex] (X-1) to (Y-1);
 \draw[->,>=latex] (Y-2) to (V-2);
 \draw[->,>=latex, line width=1.2pt] (Y-1) to (V-1);
 \draw[->,>=latex] (Y) to (V);
\draw [dashed,>=latex] (V-2) to[left] (-2,-1.2);
\draw [dashed,>=latex] (X-2) to[left] (-2,1.2);
 \draw [dashed,>=latex] (Y-2) to[right] (-2,0);
 \draw [dashed,>=latex] (V) to[right] (2,-1.2);
\draw [dashed,>=latex] (X) to[right] (2,1.2);
\draw [dashed,>=latex] (Y) to[right] (2,0);
\end{tikzpicture}
 	\caption{\centering Another candidate FTCG.}
 \label{fig:example_non_identifiable_cond2a_FTCG2}
 \end{subfigure}
 \caption{An example of an SCG $\mathcal{G}^s_2$ in (a) satisfying Condition 2a in Theorem~\ref{Thm:identification_summary} and two candidate FTCGs in (b) and (c). Each pair of red and blue vertices in the FTCGs represents the total effect we are interested in. Gray vertices are ambiguous: they are on an active backdoor path in (b) and belong to a directed path in (c) (bold edges indicate direct paths from $X_{t-1}$ to $Y_t$).}
 \label{fig:example_identifiable_2.2}
\end{figure}%

	
	\begin{example}
		\label{example:2}
		Consider the SCG in Figure~\ref{fig:example_non_identifiable_cond2a_SCG} and the two candidate FTCGs in Figures~\ref{fig:example_non_identifiable_cond2a_FTCG1} and \ref{fig:example_non_identifiable_cond2a_FTCG2}. Suppose we are interested in the total effect $P(y_t\mid do(x_{t-1}))$.  
		The path $\langle X_{t-1}, Z_{t-1}, X_t, Y_t \rangle$ is an active back-door path in the first FTCG depicted in (b).  Since $Z_{t-1}$ is the only vertex on this path that is not an endpoint and that does not belong to a directed path in the same graph, we need to adjust for it to eliminate the confounding bias induced by this path.
		However, in the second FTCG depicted in \ref{fig:example_non_identifiable_cond2a_FTCG2}, $\langle X_{t-1}, \langle Y_{t-1}, Z_{t-1}, X_t, Y_t \rangle$ forms a directed path. This implies that we should not adjust for $Z_{t-1}$ to preserve the influence of $X_{t-1}$ on $Y_t$ through the path passing by $Z_{t-1}$.
		Since we do not know which FTCG is the true one, then we cannot determine whether we should adjust for $Z_{t-1}$ or not. Consequently, there is no valid adjustment set to identify the total effect $P(y_t\mid do(x_{t-1}))$.
		
	\end{example}
	
	\begin{figure}[t]
\centering
 \begin{subfigure}{.45\textwidth}
 \centering
	\begin{tikzpicture}[{black, circle, draw, inner sep=0}]
	\tikzset{nodes={draw,rounded corners},minimum height=0.6cm,minimum width=0.6cm}	
	\tikzset{anomalous/.append style={fill=easyorange}}
	\tikzset{rc/.append style={fill=easyorange}}
	
	\node[fill=red!30] (X) at (0,0) {$X$} ;
	\node[fill=blue!30] (Y) at (1.2,0) {$Y$};
	
 \begin{scope}[transform canvas={yshift=-.25em}]
 \draw [->,>=latex] (X) -- (Y);
 \end{scope}
 \begin{scope}[transform canvas={yshift=.25em}]
 \draw [<-,>=latex] (X) -- (Y);
 \end{scope}
 
	\draw[->,>=latex] (X) to [out=180,in=135, looseness=2] (X);

	\end{tikzpicture}
 \caption{\centering An SCG $\mathcal{G}^s_3$ and the total effect $P(y_t | do(x_{t - 2}))$.}
 \label{fig:example_non_identifiable_cond2b_SCG}
 \end{subfigure}
 \hfill 
 \begin{subfigure}{.23\textwidth}
 \centering
 \begin{tikzpicture}[{black, circle, draw, inner sep=0}]
 \tikzset{nodes={draw,rounded corners},minimum height=0.7cm,minimum width=0.6cm, font=\scriptsize}
 \tikzset{latent/.append style={fill=gray!60}}
 \node (X) at (1.2,1.2) {$X_t$};
 \node[fill=blue!30] (Y) at (1.2,0) {$Y_t$};
 \node[fill=gray!30] (X-1) at (0,1.2) {$X_{t-1}$};
 \node(Y-1) at (0,0) {$Y_{t-1}$};
 \node[fill=red!30] (X-2) at (-1.2,1.2) {$X_{t-2}$};
 \node[fill=gray!30] (Y-2) at (-1.2,0) {$Y_{t-2}$};
 \draw[->,>=latex, line width=1.2pt] (X-1) to (Y);
 \draw[->,>=latex, line width=1.2pt] (X-2) -- (Y-1);
 \draw[<-,>=latex] (X) to (Y);
 \draw[<-,>=latex, line width=1.2pt] (X-1) to (Y-1);
 \draw[<-,>=latex] (X-2) to (Y-2);
 \draw[->,>=latex] (Y-1) -- (X);
 \draw[->,>=latex] (Y-2) -- (X-1);
 \draw[->,>=latex, line width=1.2pt] (X-2) -- (X-1);
 \draw[->,>=latex] (X-1) -- (X);
\draw [dashed,>=latex] (X-2) to[left] (-2,1.2);
 \draw [dashed,>=latex] (Y-2) to[right] (-2,0);
\draw [dashed,>=latex] (X) to[right] (2,1.2);
\draw [dashed,>=latex] (Y) to[right] (2,0);
\end{tikzpicture}
 	\caption{\centering A first candidate FTCG.}
 \label{fig:example_non_identifiable_cond2b_FTCG1}
 \end{subfigure}
\hfill 
 \begin{subfigure}{.23\textwidth}
 \centering
\begin{tikzpicture}[{black, circle, draw, inner sep=0}]
 \tikzset{nodes={draw,rounded corners},minimum height=0.7cm,minimum width=0.6cm, font=\scriptsize}
 \tikzset{latent/.append style={fill=gray!60}}
 
 \node (X) at (1.2,1.2) {$X_t$};
 \node[fill=blue!30] (Y) at (1.2,0) {$Y_t$};
 \node[fill=gray!30] (X-1) at (0,1.2) {$X_{t-1}$};
 \node (Y-1) at (0,0) {$Y_{t-1}$};
 \node[fill=red!30] (X-2) at (-1.2,1.2) {$X_{t-2}$};
 \node[fill=gray!30] (Y-2) at (-1.2,0) {$Y_{t-2}$};
 \draw[->,>=latex, line width=1.2pt] (X-1) to (Y);
 \draw[->,>=latex, line width=1.2pt] (X-2) -- (Y-1);
 \draw[->,>=latex, line width=1.2pt] (X) to (Y);
 \draw[->,>=latex, line width=1.2pt] (X-1) to (Y-1);
 \draw[->,>=latex, line width=1.2pt] (X-2) to (Y-2);
 \draw[->,>=latex, line width=1.2pt] (Y-1) -- (X);
 \draw[->,>=latex, line width=1.2pt] (Y-2) -- (X-1);
 \draw[->,>=latex, line width=1.2pt] (X-2) -- (X-1);
 \draw[->,>=latex, line width=1.2pt] (X-1) -- (X);
\draw [dashed,>=latex] (X-2) to[left] (-2,1.2);
 \draw [dashed,>=latex] (Y-2) to[right] (-2,0);
\draw [dashed,>=latex] (X) to[right] (2,1.2);
\draw [dashed,>=latex] (Y) to[right] (2,0);
\end{tikzpicture}
 \caption{\centering Another candidate FTCG.}
 \label{fig:example_non_identifiable_cond2b_FTCG2}
 \end{subfigure}
\caption{An example of an SCG $\mathcal{G}^s_3$ in (a) satisfying Condition 2b in Theorem~\ref{Thm:identification_summary} with respect to $\Pr(y_t\mid do(x_{t-2}))$ and two candidate FTCGs in (b) and (c). Each pair of red and blue vertices in the FTCGs represents the total effect we are interested in. Gray vertices are ambiguous: they constitute a backdoor path in (b) and belong to a directed path in (c) (bold edges indicate direct paths from $X_{t-2}$ to $Y_t$). }
\label{fig:example_non_identifiable_3}
\end{figure}%

	

	
	\begin{example}
		\label{example:3}
		Consider the SCG in Figure~\ref{fig:example_non_identifiable_cond2b_SCG} and the two candidate FTCGs in Figures~\ref{fig:example_non_identifiable_cond2b_FTCG1} and \ref{fig:example_non_identifiable_cond2b_FTCG2}. Suppose we are interested in the the total effect $P(y_t\mid do(x_{t-2}))$.  
		The path $\langle X_{t-2}, Y_{t-2}, X_{t-1}, Y_t \rangle$ is an active back-door path in the first FTCG depicted in~\ref{fig:example_non_identifiable_cond2b_FTCG1}.  Since $Y_{t-2}$ is the only vertex on this path that is not an endpoint and that does not belong to a directed path in the same graph, we need to adjust for it to eliminate the confounding bias induced by this path.
		However, in the second FTCG depicted in \ref{fig:example_non_identifiable_cond2b_FTCG2}, $\langle X_{t-2}, Y_{t-2}, X_{t-1}, Y_t \rangle$ forms a directed path. This implies that we should not adjust for $Y_{t-2}$ to preserve the influence of $X_{t-2}$ on $Y_t$ through the path passing by $Y_{t-2}$.
		Since we do not know which FTCG is the true one, then we cannot determine whether we should adjust for $Y_{t-2}$ or not. Consequently, there is no valid adjustment set to identify the total effect $P(y_t\mid do(x_{t-2}))$.
	\end{example}


	\begin{example}
		\label{example:4}
		Consider the SCG in Figure~\ref{fig:example_non_identifiable_cond2c_SCG} and the two candidate FTCGs in Figures~\ref{fig:example_non_identifiable_cond2c_FTCG1} and \ref{fig:example_non_identifiable_cond2c_FTCG2}. Suppose we are interested in the the total effect $P(y_t\mid do(x_{t-1}))$.  
		The path $\langle X_{t-1}, Y_{t-1}, Y_t \rangle$ is an active back-door path in the first FTCG depicted in~\ref{fig:example_non_identifiable_cond2c_FTCG1}.  Since $Y_{t-1}$ is the only vertex on this path that is not an endpoint, we need to adjust for it to eliminate the confounding bias induced by this path.
		However, in the second FTCG depicted in \ref{fig:example_non_identifiable_cond2c_FTCG2}, $\langle X_{t-1}, Y_{t-1}, Y_t \rangle$ forms a directed path. This implies that we should not adjust for $Y_{t-1}$ to preserve the influence of $X_{t-1}$ on $Y_t$ through the path passing by $Y_{t-1}$.
		Since we do not know which FTCG is the true one, then we cannot determine whether we should adjust for $Y_{t-1}$ or not. Consequently, there is no valid adjustment set to identify the total effect $P(y_t\mid do(x_{t-1}))$.
	\end{example}
	
	Notice that in Figure~\ref{fig:example_identifiable_6}, removing the self-loop on $Y$ makes the total effect identifiable. This is because the active backdoor path and the directed path discussed in Example \ref{example:4} would no longer exist, leaving only directed paths or blocked (due to a collider) backdoor paths between $X_{t-1}$ and $Y_t$.

	\begin{figure}[t]
\centering
 \begin{subfigure}{.45\textwidth}
 \centering
	\begin{tikzpicture}[{black, circle, draw, inner sep=0}]
	\tikzset{nodes={draw,rounded corners},minimum height=0.6cm,minimum width=0.6cm}	
	\tikzset{anomalous/.append style={fill=easyorange}}
	\tikzset{rc/.append style={fill=easyorange}}
	
	\node[fill=red!30] (X) at (0,0) {$X$} ;
	\node[fill=blue!30] (Y) at (1.2,0) {$Y$};
	
 \begin{scope}[transform canvas={yshift=-.25em}]
 \draw [->,>=latex] (X) -- (Y);
 \end{scope}
 \begin{scope}[transform canvas={yshift=.25em}]
 \draw [<-,>=latex] (X) -- (Y);
 \end{scope}
 
	\draw[->,>=latex] (X) to [out=180,in=135, looseness=2] (X);
 	\draw[->,>=latex] (Y) to [out=0,in=45, looseness=2] (Y);

	\end{tikzpicture}
 \caption{\centering An SCG $\mathcal{G}^s_4$ and the total effect $P(y_t | do(x_{t - 1}))$.}
 \label{fig:example_non_identifiable_cond2c_SCG}
 \end{subfigure}
 \hfill 
 \begin{subfigure}{.23\textwidth}
 \centering
 \begin{tikzpicture}[{black, circle, draw, inner sep=0}]
 \tikzset{nodes={draw,rounded corners},minimum height=0.7cm,minimum width=0.6cm, font=\scriptsize}
 \tikzset{latent/.append style={fill=gray!60}}
 
 \node (X) at (1.2,1.2) {$X_t$};
 \node[fill=blue!30] (Y) at (1.2,0) {$Y_t$};
 \node[fill=red!30] (X-1) at (0,1.2) {$X_{t-1}$};
 \node[fill=gray!30] (Y-1) at (0,0) {$Y_{t-1}$};
 \node (X-2) at (-1.2,1.2) {$X_{t-2}$};
 \node (Y-2) at (-1.2,0) {$Y_{t-2}$};
 
 \draw[->,>=latex, line width=1.2pt] (X-1) to (Y);

 \draw[->,>=latex] (X-2) -- (Y-1);

 \draw[<-,>=latex] (X) to (Y);
 \draw[<-,>=latex] (X-1) to (Y-1);
 \draw[<-,>=latex] (X-2) to (Y-2);

 \draw[->,>=latex] (Y-1) -- (X);
 \draw[->,>=latex] (Y-2) -- (X-1);

 \draw[->,>=latex] (X-2) -- (X-1);
 \draw[->,>=latex] (X-1) -- (X);

 \draw[->,>=latex] (Y-2) -- (Y-1);
 \draw[->,>=latex] (Y-1) -- (Y);

\draw [dashed,>=latex] (X-2) to[left] (-2,1.2);
 \draw [dashed,>=latex] (Y-2) to[right] (-2,0);
\draw [dashed,>=latex] (X) to[right] (2,1.2);
\draw [dashed,>=latex] (Y) to[right] (2,0);
\end{tikzpicture}
 	\caption{\centering A first candidate FTCG.}
 \label{fig:example_non_identifiable_cond2c_FTCG1}
 \end{subfigure}
\hfill 
 \begin{subfigure}{.23\textwidth}
 \centering
\begin{tikzpicture}[{black, circle, draw, inner sep=0}]
 \tikzset{nodes={draw,rounded corners},minimum height=0.7cm,minimum width=0.6cm, font=\scriptsize}
 \tikzset{latent/.append style={fill=gray!60}}
 
 \node (X) at (1.2,1.2) {$X_t$};
 \node[fill=blue!30] (Y) at (1.2,0) {$Y_t$};
 \node[fill=red!30] (X-1) at (0,1.2) {$X_{t-1}$};
 \node[fill=gray!30] (Y-1) at (0,0) {$Y_{t-1}$};
 \node (X-2) at (-1.2,1.2) {$X_{t-2}$};
 \node (Y-2) at (-1.2,0) {$Y_{t-2}$};
 
 \draw[->,>=latex, line width=1.2pt] (X-1) to (Y);

 \draw[->,>=latex] (X-2) -- (Y-1);

 \draw[->,>=latex, line width=1.2pt] (X) to (Y);
 \draw[->,>=latex, line width=1.2pt] (X-1) to (Y-1);
 \draw[->,>=latex] (X-2) to (Y-2);

 \draw[->,>=latex, line width=1.2pt] (Y-1) -- (X);
 \draw[->,>=latex] (Y-2) -- (X-1);

 \draw[->,>=latex] (X-2) -- (X-1);
 \draw[->,>=latex, line width=1.2pt] (X-1) -- (X);

 \draw[->,>=latex] (Y-2) -- (Y-1);
 \draw[->,>=latex, line width=1.2pt] (Y-1) -- (Y);

\draw [dashed,>=latex] (X-2) to[left] (-2,1.2);
 \draw [dashed,>=latex] (Y-2) to[right] (-2,0);
\draw [dashed,>=latex] (X) to[right] (2,1.2);
\draw [dashed,>=latex] (Y) to[right] (2,0);
\end{tikzpicture}
  \caption{\centering Another candidate FTCG.}
 \label{fig:example_non_identifiable_cond2c_FTCG2}
 \end{subfigure}

 \caption{An example of an SCG $\mathcal{G}^s_4$ in (a) satisfying Condition 2c in Theorem~\ref{Thm:identification_summary} with respect to $\Pr(y_t\mid do(x_{t-1}))$ and two candidate FTCGs in (b) and (c). Each pair of red and blue vertices in the FTCGs represents the total effect we are interested in. Gray vertices are ambiguous: they constitute a backdoor path in (b) and belong to a directed path in (c) (bold edges indicate direct paths from $X_{t-1}$ to $Y_t$).}
 \label{fig:example_identifiable_6}
\end{figure}%
	
	\section{Discussion on real-world applications}
	\label{sec:real_app}


	\textbf{Nephrology.} 
	Hypertension has long been considered as a risk factor for kidney function decline. 
	At the same time, the kidney is known to have a major role in affecting blood pressure through sodium extraction and regulating electrolyte balance~\citep{Yu_2020}. This can be represented with the SCG in Figure~\ref{fig:real_nephro} where the kidney function is represented by the creatinine level.
	Epidemiologists are interested to know if preventing kidney function decline can reduce the public health burden of hypertension and at the same time nephrologists are interested in knowing how much a treatment related to hypertension can improve the state of the kidney.
	Using Theorem~\ref{Thm:identification_summary} and assuming no hidden confounding, we can identify the total effect in each direction with a lag equal to $1$ (if there are confounders that do not form additional cycles, the total effect remains identifiable if we measure them and take them into account in the SCG). We can collect data for estimation by conducting weekly blood tests on patients with kidney insufficiency, especially those whose hypertension and creatinine levels fluctuate.

	\textbf{Finance.} 
	It has been suggested that there exists a bidirectional causal relationship between the number of unique active wallets associated with bridge protocols and the mean transaction fees within the Ethereum network~\citep{Ante_2024}. Additionally, we consider that transaction fees causes itself over time, as depicted in the SCG shown in Figure~\ref{fig:real_eco}. In this scenario, the total effect of mean transaction fees on the number of unique active wallets is identifiable using Theorem~\ref{Thm:identification_summary} with a lag of $1$. However, the same does not hold true for the opposite direction: the total effect of the number of unique active wallets on the mean transaction fees is not identifiable  using Theorem~\ref{Thm:identification_summary}.
	
	\textbf{System monitoring.}
	Consider a subgraph of the SCG described in \citep{Bystrova_2024}, representing the web activity in an IT system. Suppose that system experts observed a high number of queries at midnight for several weeks, likely due to a Distributed Denial of Service attack. Simultaneously, they noticed that CPU usage at midnight was very high, preventing the system from running some processes. Therefore, the system experts would like to determine (before intervening in the system) how much a reduction in bandwidth in the network would reduce the global CPU usage.
	Theorem~\ref{Thm:identification_summary} shows that the total effect of Network input on CPU Global is identifiable for any lag.
	In addition, Theorem~\ref{Thm:identification_summary} implies that the total effect between all pairs of variables is identifiable since in the SCG there exists no cycles of size greater than $2$.
	We can estimate  those total effect using the data introduced in \cite{Bystrova_2024}.
	\begin{figure}
  \begin{subfigure}{.15\textwidth}
      \centering
\begin{tikzpicture}[{black, rectangle, draw, inner sep=0.1cm, scale=0.9}]
\tikzset{nodes={draw,rounded corners},minimum height=0.7cm,minimum width=0.7cm, font=\scriptsize}
	
	\node[left color=red!30,right color=blue!30] (Creatinine) at (0,0) {Creatinine};
	\node[left color=blue!30,right color=red!30] (Hypertension) at (0,-1) {Hypertension};

	 \begin{scope}[transform canvas={xshift=-.25em}]
         \draw [->,>=latex] (Creatinine) -- (Hypertension);
         \end{scope}
         \begin{scope}[transform canvas={xshift=.25em}]
         \draw [<-,>=latex] (Creatinine) -- (Hypertension);
     \end{scope}
        
	\end{tikzpicture}
\caption{\centering Nephrology.}
 \label{fig:real_nephro}
 \end{subfigure}
\hfill
  \begin{subfigure}{.35\textwidth}
      \centering
\begin{tikzpicture}[{black, rectangle, draw, inner sep=0.1cm, scale=0.9}]
\tikzset{nodes={draw,rounded corners},minimum height=0.7cm,minimum width=0.7cm, font=\scriptsize}
	\node[fill=red!30] (MTF) at (0,0) {Mean Transaction Fees};
	\node[fill=blue!30] (NUAW) at (0,-1) {Nb of Unique Active Wallets};
	 \begin{scope}[transform canvas={xshift=-.25em}]
         \draw [->,>=latex] (MTF) -- (NUAW);
         \end{scope}
         \begin{scope}[transform canvas={xshift=.25em}]
         \draw [<-,>=latex] (MTF) -- (NUAW);
     \end{scope}
	\draw[->,>=latex] (MTF) to [out=180,in=135, looseness=2] (MTF);
	\end{tikzpicture}
\caption{\centering Finance.}
 \label{fig:real_eco}
 \end{subfigure}

\vspace{0.5cm}
 
  \begin{subfigure}{.45\textwidth}
     \centering
\begin{tikzpicture}[{black, rectangle, draw, inner sep=0.1cm, scale=0.9}]
\tikzset{nodes={draw,rounded corners},minimum height=0.7cm,minimum width=0.7cm, font=\scriptsize}
	
	\node[fill=blue!30] (CpuG) at (0,-3) {Cpu Global};
	\node (CpuHttp) at (2.5,-2) {Cpu Http};
	\node (NbProcessHttp) at (2.5,-1) {Nb Process Http};
	\node (NCM) at (-2.5,-1) {Nb Sql Connect};
 	\node (NbProcessPhp) at (0, -1) {Nb Process Php};
 	\node (CpuPhp) at (-2.5, -2) {Cpu Php};
	\node (DiskW) at (0,-2) {Disk Write};
	\node[fill=red!30] (NetInG) at (0, 0) {Network Input};

	\draw[->,>=latex] (NetInG) -- (NbProcessHttp);
	\draw[->,>=latex] (NetInG) -- (NCM);
	\draw[->,>=latex] (NbProcessHttp) -- (NbProcessPhp);
	\draw[->,>=latex] (NbProcessHttp) -- (CpuHttp);
	\draw[->,>=latex] (NbProcessPhp) -- (NCM);
	\draw[->,>=latex] (NbProcessPhp) -- (CpuPhp);
	\draw[->,>=latex] (NCM) -- (DiskW);
	\draw[->,>=latex] (CpuHttp) -- (CpuG);
	\draw[->,>=latex] (CpuPhp) -- (CpuG);
	\draw[->,>=latex] (NCM) -- (CpuG);
	\draw[->,>=latex] (DiskW) -- (CpuG);
	
	\draw[->,>=latex] (NetInG) to [out=180,in=135, looseness=2] (NetInG);
	\draw[->,>=latex] (NbProcessHttp) to [out=0,in=45, looseness=2] (NbProcessHttp);
	\draw[->,>=latex] (NbProcessPhp) to [out=90,in=45, looseness=2] (NbProcessPhp);
	\draw[->,>=latex] (CpuHttp) to [out=0,in=45, looseness=2] (CpuHttp);
	\draw[->,>=latex] (CpuPhp) to [out=180,in=135, looseness=2] (CpuPhp);
	\draw[->,>=latex] (CpuG) to [out=200,in=155, looseness=2] (CpuG);
 	\draw[->,>=latex] (NCM) to [out=180,in=135, looseness=2] (NCM);
 	\draw[->,>=latex] (DiskW) to [out=90,in=45, looseness=2] (DiskW);
	\end{tikzpicture}
\caption{\centering System monitoring.}
 \label{fig:real_it}
 \end{subfigure}
 
 \vspace{0.5cm}
 
  \begin{subfigure}{.45\textwidth}
     \centering
\begin{tikzpicture}[{black, rectangle, draw, inner sep=0.1cm, scale=0.9}]
\tikzset{nodes={draw,rounded corners},minimum height=0.7cm,minimum width=0.7cm, font=\scriptsize}
	
	\node (Outside) at (0,0) {Outside};
	\node[fill=red!30] (LivingRoom) at (0, -1) {Living Room};
	\node (Kitchen) at (-2,-1) {Kitchen};
	\node (Bathroom) at (2,-1) {Bathroom};
	\node[fill=blue!30] (Office) at (0,-2) {Office};

	\draw[->,>=latex] (Outside) -- (LivingRoom);
	\draw[->,>=latex] (Outside) -- (Kitchen);
	\draw[->,>=latex] (Outside) -- (Bathroom);
	\draw[->,>=latex] (LivingRoom) -- (Office);
	
	 \begin{scope}[transform canvas={yshift=-.25em}]
         \draw [->,>=latex] (LivingRoom) -- (Kitchen);
         \end{scope}
         \begin{scope}[transform canvas={yshift=.25em}]
         \draw [<-,>=latex] (LivingRoom) -- (Kitchen);
     \end{scope}

     \begin{scope}[transform canvas={yshift=-.25em}]
         \draw [->,>=latex] (LivingRoom) -- (Bathroom);
         \end{scope}
         \begin{scope}[transform canvas={yshift=.25em}]
         \draw [<-,>=latex] (LivingRoom) -- (Bathroom);
     \end{scope}

     \draw[->,>=latex] (Outside) to [out=180,in=135, looseness=2] (Outside);
	\draw[->,>=latex] (LivingRoom) to [out=75,in=30, looseness=2] (LivingRoom);
	\draw[->,>=latex] (Kitchen) to [out=180,in=135, looseness=2] (Kitchen);
	\draw[->,>=latex] (Bathroom) to [out=0,in=45, looseness=2] (Bathroom);
 	\draw[->,>=latex] (Office) to [out=200,in=155, looseness=2] (Office);
	\end{tikzpicture}
\caption{\centering Thermoregulation.}
 \label{fig:real_thermoregulation}
 \end{subfigure}
    \caption{Real summary causal graphs from (a) Nephrology, (b) Finance, (c) System Monitoring, and (d) Thermoregulation. Pairs of red and blue vertices represents the total effect(s) of interest. 
    According to Theorem~\ref{Thm:identification_summary}, each of these total effects is either identifiable in general or identifiable under certain conditions on $\gamma$.}
    \label{fig:enter-label}
\end{figure}
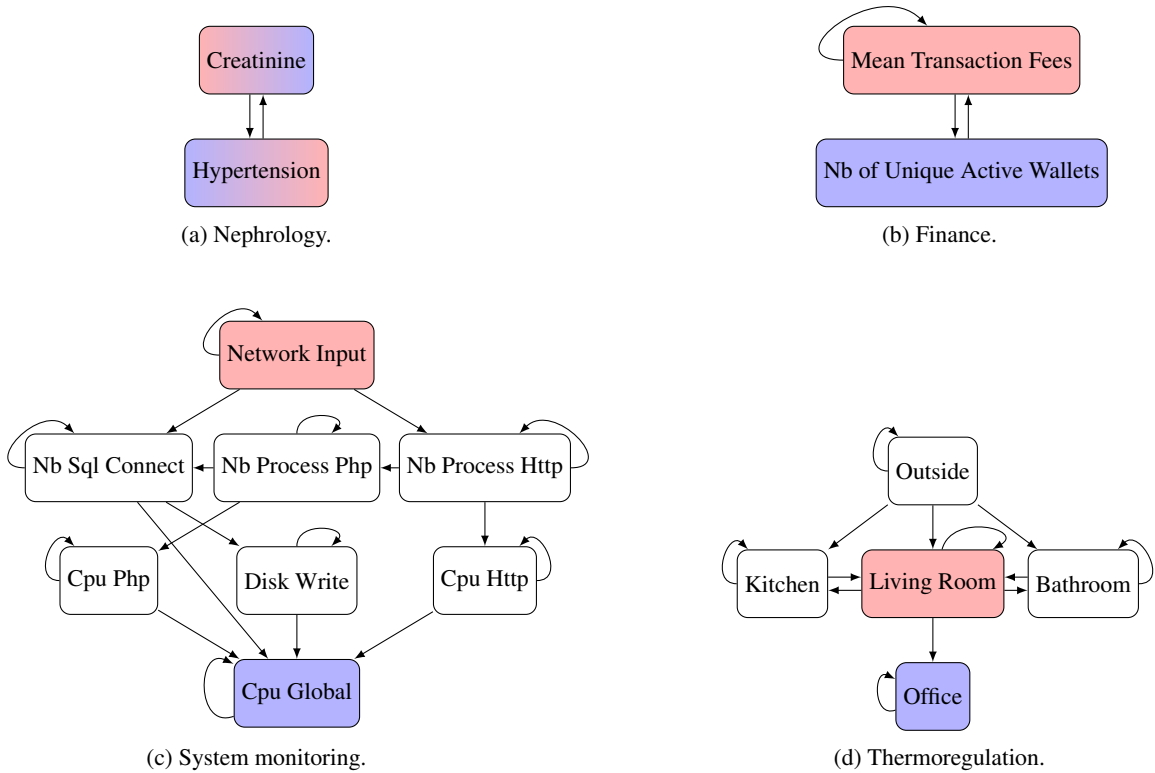
	
	\textbf{Thermoregulation.}
	Inspired by the experiment conducted in \cite{Peters_2013}, we consider maintaining a steady temperature in an apartment composed of four rooms: a living room, a kitchen, a bathroom, and an office. The living room is the only room containing a radiator, and all rooms are connected to each other through the living room. Additionally, all rooms contain a window except for the office.
	Temperature sensors were placed in the four rooms, plus one outside the apartment, and temperatures were recorded on an hourly basis.
	We consider the SCG presented in Figure~\ref{fig:real_thermoregulation} as the true one. Clearly, the outside temperature directly influences all rooms containing a window and the temperature in each room cannot cause the outside temperature.
	Since the living room contains a radiator, it can affect the temperatures in all other rooms. Additionally, since we may use fire in the kitchen for cooking, which can increase the temperature, we consider that the temperature in the kitchen can affect the temperature in the living room. Similarly, since we may use hot water in the bathroom, which can increase the temperature, we consider that the temperature in the bathroom can influence the temperature in the living room.
	All other vertices representing rooms in the graph are not connected to each other because they are not physically directly connected; they are all connected through the living room.
	Suppose we are specifically interested in estimating the total effect of the temperature in living room on the temperature in the office. Theorem~\ref{Thm:identification_summary} states that this total effect is identifiable for any lag since $Cycles(\text{Living Room}, \mathcal{G}\backslash\{\text{Office}\})=\emptyset$ and there exists no $\sigma$-active backdoor path between Living Room and Office.

	\section{Conclusion}
	\label{sec:conclusion}
	
	We studied in this paper the identification of total effects between singleton variables, under causal sufficiency, for both extended summary causal graphs and summary causal graphs. We showed that the total effect is always identifiable for extended summary causal graphs. The same does not hold for summary causal graphs for which we established graphical conditions which are sufficient, in any underlying probability distribution, for the identifiability of the total effect. In addition, in case of identifiability, we  provided several valid adjustment sets for estimating the total effect in extended summary causal graphs, and two adjustment sets when considering summary causal graphs. 
	These results have significant implications, such as impact analysis in dynamic systems, particularly in scenarios where experts are unable to provide either a full temporal causal graph or an extended summary causal graph. They are also valuable in cases where the assumptions underlying causal discovery methods for inferring causal graphs with time lags are deemed overly restrictive. Furthermore, these results offer insights that can be useful in different disciplines such as Nephrology, Finance, System Monitoring, and Thermoregulation.
	For future works, it would be valuable to establish necessary and sufficient conditions for the identifiability of total effects using SCGs, to extend this work to the case where the responses and interventions can be multivariate, and to the case where there are hidden confounding.
	
	
	\begin{acknowledgements} 
		We thank Ali Aït-Bachir from EasyVista for discussions about the application of this work in system monitoring.
		We thank Benjamin Glemain and Nathanael Lapidus from IPLESP and Paolo Malvezzi from CHU Grenoble for discussions about the application of this work in Nephrology/Epidemiology.
		Finally, we thank Clément Yvernes from LIG for several discussions and five anonymous reviewers for their many insightful comments and suggestions.
		This work was partially supported by MIAI@Grenoble Alpes (ANR-19-P3IA-0003), by the CIPHOD project (ANR-23-CPJ1-0212-01), and by the CSPR R\&D Booster Auvergne-Rhône-Alpes project. 
	\end{acknowledgements}
	
	\bibliography{References}
	
	\appendix
	
	\section{Supplementary Material}
	\subsection{Proofs of Section \ref{sec:ESCG}}
\mytheoremtwo*
\begin{proof}
	If $X\not\in Anc(Y,\mathcal{G}^e)$, then in every candidate FTCG $\mathcal{G}^f$, $X_{t-\gamma}\not\in Anc(Y_t,\mathcal{G}^f)$. Thus, $P(y_t\mid do(x_{t-\gamma}))$ is always identifiable in $\mathcal{G}^e$, and $P(y_t\mid do(x_{t-\gamma})) = P(y_t)$. 
	
	Assume now that $X\in Anc(Y,\mathcal{G}^e)$. 
	Let $\gamma_{\max}$ be the maximal lag, and $\mathcal{G}^f$ be a candidate FTCG. We prove that 
	\begin{align*}
		\mathcal{B}_{\gamma} =& \{ (Z_{t-\gamma-\ell})_{1\leq \ell \leq \gamma_{\max}} | Z_{t^-} \in Par(X_t, \mathcal{G}^{e})\}\\
		&\cup \{Z_{t-\gamma} | Z_{t} \in Par(X_t, \mathcal{G}^{e}) \}
	\end{align*}
	is an adjustment set for $P(y_t | do(x_{t - \gamma}))$ in $\mathcal{G}^f$. 
	
	First, we have to prove that $Par(X_{t-\gamma},\mathcal{G}^f) \subseteq \mathcal{B}_{\gamma}$. 
	Let $Z_{t - \gamma - \ell} \in Par(X_{t-\gamma}, \mathcal{G}^f)$. 
	If $\ell = 0$, then $Z_t$ causes $X_t$ in $\mathcal{G}^{e}$ by consistency throughout time, which means that $Z_{t - \gamma} \in \mathcal{B}_{\gamma}$. 
	If $\ell > 0$, then $Z_{t^-}$ causes $X_t$ in $\mathcal{G}^{e}$, that is $Z_{t-\ell-\gamma} \in \mathcal{B}_{\gamma}$. 
	This shows that the set $\mathcal{B}_{\gamma}$ blocks all back-door paths relatively to $P(y_t | do(x_{t - \gamma}))$. 
	
	Then, we have to prove $\mathcal{B}_{\gamma }$ does not contain any descendant of $X_{t-\gamma}$ in $\mathcal{G}^f$. If this is true, there exists $W_{t- \gamma} \in \mathcal{B}_{\gamma} \cap Desc(X_{t-\gamma}, \mathcal{G}^f)$, at time slice $t-\gamma$ because it is a parent and a descendant of $X_{t-\gamma}$. By consistency throughout time, $W_t \in Desc(X_t,G^f)$. However, by definition of $\mathcal{B}_{\gamma}$, $X_t \in Desc(W_t,G^f)$, which contradicts the acyclicity assumption of $\mathcal{G}^{e}$.
	It means that neither it blocks any directed path  between $X_{t-\gamma}$ and $Y_t$, nor it contains any descendant of $Y_t$.
\end{proof}

\mypropositionone*
\begin{proof}
	Let $\mathcal{G}_d^f$ be the densest candidate FTCG, and $\mathcal{B}_{\gamma}'$ be an adjustment set in $\mathcal{G}_d^f$. Let $\mathcal{G}^f$ be another candidate FTCG. 
	By definition of $\mathcal{G}_d^f$, any back-door path in $\mathcal{G}^f$ is also in $\mathcal{G}_d^f$ (the last graph contains all possible edges). Then, $\mathcal{B}_{\gamma}'$ blocks all back-door paths in $\mathcal{G}^f$. Moreover, since no vertex in $\mathcal{B}_{\gamma}'$ is a descendant of $X_{t-\gamma}$ in $\mathcal{G}_d^f$, the same holds for $\mathcal{G}^f$. Thus, $\mathcal{B}_{\gamma}'$ is also an adjustment set in $\mathcal{G}^f$. 
\end{proof}

\subsection{Proofs of Section \ref{sec:SCG}}

\mypropertytwo*
\begin{proof}
	Suppose $\pi^f$ is path between $X_{t-\gamma}$ and $Y_t$ for $\gamma \geq 0$. If $\pi^f \not \in \Pi^f_{\gamma} $ and  then $\pi^f$  contains at least one vertex $Z_{t-\gamma - \ell}$ for $\ell \geq 1$. $Z_{t-\gamma - \ell}$ is temporally prior to $X_{t-\gamma}$ which means $\pi^f$ that if $\pi^f$ is a backdoor path then adjusting on $Z_{t-\gamma - \ell}$ and the parents of $Z_{t-\gamma - \ell}$ on the path will block the path. Furthermore, for the same reason, there  cannot be a directed path from $X_{t-\gamma}$ to $Z_{t-\gamma - \ell}$ in any FTCG. Finally, again for the same reason, $Z_{t-\gamma - \ell}$ and the parents of $Z_{t-\gamma - \ell}$ are in $\mathcal{A}_{\gamma}$.
\end{proof}

\mypropertythree*
\begin{proof}
	Assume first $Cycles^>(X, \mathcal{G}^s\backslash \{Y\}) = \emptyset$. 
	Suppose $\exists \pi^f=X_{t-\gamma} \leftarrow W_{t-\gamma} \cdots \rightarrow Y_t \in \Pi^f_{\gamma}$ which is a back-door path between $X_{t-\gamma}$ and $Y_t$ that is not compatible with any back-door path $\pi^s=\langle V^1=X, V^2 ,\cdots, V^{n-1} ,V^n=Y \rangle$ in $\mathcal{G}^s$. 
	
	If $n=2$, then the path compatible with the cycle $\langle X,X \rangle$ is of the form $X_{t-\gamma} \rightarrow X_{t-\gamma +i} \rightarrow \cdots \rightarrow X_{t-\gamma +j} \rightarrow Y_t$: it means that $\pi^f$ cannot be a back-door path. 
	
	If $n>2$, $W_{t-\gamma}$ is such that $W \not \in \{V^2 ,\cdots, V^{n-1}\}$ and $\nexists V\in \{V^2 ,\cdots, V^{n-1}\}$ such that $W \in Cycles(V, \mathcal{G}^s)$.
	If the path between $W_{t-\gamma}$ and $Y_t$ 
	does not pass by $X_{t-\gamma + \ell}$ with $\ell >0$, then there exists a back-
	door path between $X$ and $Y$ passing by $W$ in $\mathcal{G}^s$ as $\pi^f$ lies in a candidate FTCG, which contradicts our assumption. So the path necessarily passes by $X_{t-\gamma + \ell}$. Thus there is a cycle $C_x$ on $X$ such that $size(C_x)>2$, which leads again to a contradiction. Thus, there does not exist a back-door path $\pi^f \in \Pi^f_{\gamma}$ between $X_{t-\gamma}$ and $Y_t$ that is not compatible with any back-door path in $\mathcal{G}^s$. 
	
	The case $\gamma=0$ is treated in the same way, with the fact that the path considered cannot go back to $X_{t}$ as this would create a cycle in the FTCG.
	
\end{proof}

\mylemmafive*
\begin{proof}
	If $X\not\in Anc(Y,\mathcal{G}^s)$, then in every candidate FTCG $\mathcal{G}^f$, $X_{t-\gamma}\not\in Anc(Y_t,\mathcal{G}^f)$. Thus, $P(y_t\mid do(x_{t-\gamma}))$ is always identifiable in $\mathcal{G}^s$, and $P(y_t\mid do(x_{t-\gamma})) = P(y_t)$. 
\end{proof}

\mylemmasix*
\begin{proof}
	We will prove that $A_\gamma$ is an adjustment set for $P(y_t\mid do(x_{t-\gamma}))$ in any candidate FTCG  under conditions (i) and (ii). Let $\mathcal{G}^f$ be a candidate FTCG, and $\Pi^f_{\gamma}$ the set of ambiguous paths. By Property~\ref{property:non_ambiguous_identif}, any back-door path $\pi^f \notin \Pi^f_{\gamma}$ can be blocked by $\mathcal{A}_{\gamma}$. Furthermore, by definition, elements of $\mathcal{A}_{\gamma}$ cannot 
	be descendant of $X_{t-\gamma}$. 
	
	We now turn our attention to paths in $\Pi^f_{\gamma}$. Let $\pi^f\in \Pi^f_{\gamma}$ be a back-door path between $X_{t-\gamma}$ and $Y_t$. Since $\gamma=0$ or $Cycles^>(X, \mathcal{G}^s\backslash \{Y\}) = \emptyset$ then by Property~\ref{property:compatible_paths}, 
	all back-door paths in $\Pi^f_{\gamma}$ are compatible with back-door paths in $\mathcal{G}^s$. Let $\pi^s=\langle V^1=X, \cdots, V^n=Y\rangle$ be a $\sigma$-active back-door path in $\mathcal{G}^s$ compatible with $\pi^f$. 
	By (ii), there exists $m\geq 1$ vertices such that $\{V^{i_1}, \cdots, V^{i_{m}}\} \subseteq \langle V^2, \cdots, V^{n-1}\rangle$ and $\{V^{i_1}, \cdots, V^{i_{m}}\} \not\subset Desc(X,\mathcal{G}^s)$. Then, $\forall V_{t-\gamma}$ such that $V \in \{V^{i_1}, \cdots, V^{i_{m}}\}$, $V_{t-\gamma}\not\in Desc(X_{t-\gamma},\mathcal{G}^f)$ and since $X\in Anc(Y, \mathcal{G}^s)$ then it must be the case that $V\not\in Desc(Y, \mathcal{G}^s)$ and by consequence $V_{t-\gamma}\not\in Desc(Y_{t},\mathcal{G}^f)$. Thus, $V_{t-\gamma}$ cannot be an ambiguous vertex. Its parent in $\pi^f$ furthermore blocks $\pi^f$, is not ambiguous (as otherwise $V_{t-\gamma}$ would be ambiguous) and is a member of $ \mathcal{A}_{\gamma}$ by definition of $ \mathcal{A}_{\gamma}$.  Thus $\mathcal{A}_{\gamma}$ blocks all back-door paths between $X_{t-\gamma}$ and $Y_t$ in any candidate FTCG $\mathcal{G}^f$. Furthermore, no vertex in $\mathcal{A}_{\gamma}$ can block a directed path between $X_{t-\gamma}$ and $Y_t$ or is a descendant of $Y_t$ as vertices in $\mathcal{A}_{\gamma}$ are either defined before $t-\gamma$ or are not descendant of $X_{t-\gamma}$, and thus of $Y_t$. This concludes the proof. 
\end{proof}

\mylemmaseven*
\begin{proof}
	We will prove that $\mathcal{A}_1$ is an adjustment set for $P(y_t\mid do(x_{t-1}))$ in any candidate FTCG  under conditions (i) and (ii). Let $\mathcal{G}^f$ be a candidate FTCG, and $\Pi^f_{1}$ the set of ambiguous paths.


	\textcolor{purple}{By Property~\ref{property:non_ambiguous_identif}, any path $\pi^f \notin\Pi^f_{1}$ can be blocked by $ \mathcal{A}_{1}$. Therefore, in the following, we focus on paths in $\Pi^f_{1}$. }
	
	\textcolor{purple}{First let's consider backdoor paths in $\mathcal{G}^f$ that are not compatible with any backdoor paths in $\mathcal{G}^s$.
		Since (i) and (ii), the only $\sigma$-active backdoor path $\pi^s=V^1=X, \cdots, V^n=Y$ such that $V^2, \cdots, V^{n-1}$$\subseteq Desc(X, \mathcal{G}^s)$ is $X\rightleftarrows Y$. Which means by Property~\ref{property:compatible_paths},  the only backdoor paths in $\mathcal{G}^f$ that are not compatible with any backdoor paths in $\mathcal{G}^s$ has to be induced by $X \rightleftarrows Y$.
		These back-door paths are of the form $\langle X_{t-1}, W_{t-1}, \cdots, Y_t \rangle$, where $W_{t-1}$ must be a parent of $X_{t-1}$ and $W$ must be a parent of $X$. By (i), $W$ cannot be a descendant of $X$, and consequently, $W_{t-1}$ cannot be a descendant of $X_{t-1}$. Therefore, $W_{t-1}$ belongs to $\mathcal{A}_1$, ensuring that $\mathcal{A}_1$ blocks all paths of this form.
	}
	
	\textcolor{purple}{Now let's consider backdoor paths in $\mathcal{G}^f$ that are compatible with  backdoor paths in $\mathcal{G}^s$. By Lemma~\ref{lemma:2}, all backdoor paths that are compatible with paths in $\mathcal{G}^s$ that do not pass by $X\rightleftarrows Y$ are blocked by $\mathcal{A}_1$. In the following we only conisder backdoor paths in $\mathcal{G}^f$ that are compatible with $X\rightleftarrows Y$.}

	Consider the $\sigma$-active back-door path $\pi^s=\langle X, Y\rangle$. 
	As there cannot be a loop on $Y$ by (ii), the only path $\pi^f \in \Pi^f_{1}$ from $X_{t-1}$ to $Y_t$ compatible with $\pi^s$ that pass by $Y_{t-1}$ is $ \pi_f = \langle X_{t-1},Y_{t-1},X_{t},Y_{t}\rangle$. 
	Then, under consistency throughout time, acyclicity and temporal priority, the only choices are 
	$X_{t-1}\rightarrow Y_{t-1} \rightarrow X_t \rightarrow Y_t$ and $X_{t-1}\leftarrow Y_{t-1} \rightarrow X_t \leftarrow Y_t$. The first is a directed path, the second a back-door path already blocked due to the collider $Y_{t-1} \rightarrow X_t \leftarrow Y_t$. 
	Thus, all potential back-door paths between $X_{t-1}$ and $Y_t$ in any candidate $\mathcal{G}^f$ are blocked, and $\mathcal{A}_{1}$ does not activate them. 
\end{proof}

\mytheoremthree*
\begin{proof}
	The proof of this theorem is given by Lemmas~\ref{lemma:1}-\ref{lemma:3}. 
\end{proof}

\mypropositiontwo*
\begin{proof}
	\textcolor{purple}{
		Let $\mathcal{G}^f$ be an candidate FTCG.
		Consider $V_{t'} \in \mathcal{A}_{\gamma}\backslash{\mathcal{A}}'_{\gamma}$: by definition of $\mathcal{A}'_\gamma$, it follows that $V_{t'}\not\in Anc(X_{t-\gamma}, \mathcal{G}^f)\cup Anc(Y_{t}, \mathcal{G}^f)$. 
		Therefore $V_{t'}$ does not lie on any back-door path between  $X_{t-\gamma}$ and $Y_t$ that is naturally active (active by the empty set). However, $V_{t'}$ can be on a back-door path that was activated after adjusting on $\mathcal{A}_{\gamma}\backslash\{V_{t'}\}$.
		Let's consider three cases: when the activated back-door path $\pi_f$ between  $X_{t-\gamma}$ and $Y_t$  passing by $V_{t'}$ is of size $3$, when $\pi_f$ is of size $4$  and when $\pi_f$ is of size greater than $4$.
		\begin{itemize}
			\item  $size(\pi_f)=3$: To obtain such a path, we need to have $V_{t'}\in Desc(C_{t''}, \mathcal{G}^f)$ such that  $X_{t-\gamma} \rightarrow C_{t''} \leftarrow Y_t$ and $C_{t''}\in \mathcal{A}_{\gamma}$. However, since $X_{t-\gamma}\in Pa(C_{t''}, \mathcal{G}^f)$ then $C_{t''}\not\in \mathcal{A}_{\gamma}$. Therefore such a path $\pi_f$ does not exist.
			\item  $size(\pi_f)=4$: To obtain such a path, we need to have $V_{t'}\in Desc(C_{t''}, \mathcal{G}^f)$ such that  $X_{t-\gamma} \rightarrow C_{t''} \leftarrow W_{t'''}\rightarrow Y_t$ or $X_{t-\gamma} \leftarrow  W_{t'''} \rightarrow C_{t''} \leftarrow Y_t$  and $C_{t''}\in \mathcal{A}_{\gamma}$. However, since $X_{t-\gamma}\in Pa(C_{t''}, \mathcal{G}^f)$ or $Y_{t}\in Pa(C_{t''}, \mathcal{G}^f)$ then $C_{t''}\not\in \mathcal{A}_{\gamma}$. Therefore such a path $\pi_f$ does not exist.
			\item  $size(\pi_f)\ge5$: To obtain such a path, we need to have $V_{t'}\in Desc(C_{t''}, \mathcal{G}^f)$ such that there is an active path from $X_{t-\gamma}, \cdots, W_{t'''}\rightarrow C_{t''}$ and an active path from $Y_t, \cdots, Z_{t''''}\rightarrow C_{t''}$ such that  $C_{t''}\in \mathcal{A}_{\gamma}$. $W_{t'''}$ and $Z_{t''''}$ are in ${\mathcal{A}}'_{\gamma}$ and. Which means when the conditions of Theorem~\ref{Thm:identification_summary} are satisfied, such a path $\pi_f$ is blocked by ${\mathcal{A}}'_{\gamma}$.
		\end{itemize}
		Therefore, $V_{t'}$ is not necessary in the adjustment set, confirming that ${\mathcal{A}}'_{\gamma}$ is also an adjustment set. }
\end{proof}

\subsection{Another version of Theorem~\ref{Thm:identification_summary}}

\begin{theoremV2}
	Consider an SCG $\mathcal{G}^s=(\mathcal{V}^s,\mathcal{E}^s)$. The total effect $P(y_t\mid x_{t-\gamma})$ with $\gamma$ is identifiable from $\mathcal{G}^s$ if  $X \not\in Anc(Y, \mathcal{G}^s)$ or $X \in Anc(Y, \mathcal{G}^s)$ and one of the following conditions holds:
	\begin{enumerate}
		\item Cycles$^{>}(X, \mathcal{G}^s\backslash \{Y\})= \emptyset$ and there exists no $\sigma$-active backdoor path $\pi^s=\langle V^1=X,\dots,V^n=Y \rangle$ from $X$ to $Y$ in $\mathcal{G}^s$ such that $\langle V^2,\dots,V^{n-1} \rangle \subseteq Desc(X, \mathcal{G}^s)$ or
		\item $\gamma = 0$ and there exists no $\sigma$-active backdoor path $\pi^s=\langle V^1=X,\dots,V^n=Y \rangle$ from $X$ to $Y$ in $\mathcal{G}^s$ such that $\langle V^2,\dots,V^{n-1} \rangle \subseteq Desc(X, \mathcal{G}^s)$ or
		\item Cycles$^{>}(X, \mathcal{G}^s\backslash \{Y\})= \emptyset$ and there exists $\sigma$-active backdoor path $\pi^s=\langle V^1=X,\dots,V^n=Y \rangle$ from $X$ to $Y$ in $\mathcal{G}^s$ such that $\langle V^2,\dots,V^{n-1} \rangle \subseteq Desc(X, \mathcal{G}^s)$, and $n= 2$, and $\gamma = 1$, and \textcolor{purple}{$Cycles(Y, \mathcal{G}^s)\backslash\{X\leftrightarrows Y\} = \emptyset$}.
	\end{enumerate}
\end{theoremV2}

\end{document}